\newtheorem{theorem}{Theorem}[section]
\newtheorem{lemma}[theorem]{Lemma}
\newtheorem{corollary}[theorem]{Corollary}
\theoremstyle{definition}
\newtheorem*{proposition*}{Proposition}
\newtheorem*{remark*}{Remark}
\numberwithin{equation}{section}
\newcommand{\R}{\mathbb{R}}
\newcommand{\Z}{\mathbb{Z}}
\newcommand{\Q}{\mathbb{Q}}
\newcommand{\C}{\mathbb{C}}
\newcommand{\vprod}[1]{\left\langle #1 \right\rangle}
\newcommand{\ptmatrix}[4]{\left( \begin{smallmatrix} #1 & #2 \\ #3 & #4 \end{smallmatrix} \right)}
\newcommand{\ptMatrix}[4]{\left( \begin{matrix} #1 & #2 \\ #3 & #4 \end{matrix} \right)}
\DeclareMathOperator{\SL}{SL}
\DeclareMathOperator{\Mp}{Mp}
\newcommand{\pfrac}[2]{\left(\frac {#1}{#2}\right)}
\renewcommand{\bar}{\overline}
\newcommand{\ep}{\varepsilon}
\newcommand{\e}{\mathfrak{e}}
\newcommand{\lf}{\left\lfloor\frac{\lambda}{2}\right\rfloor}
\newcommand{\ltf}{\left\lfloor\tfrac{\lambda}{2}\right\rfloor}
\begin{document}

\title[Weil bound for generalized Kloosterman sums]{The Weil bound for generalized Kloosterman sums of half-integral weight}

\author{Nickolas Andersen}
\email{nick@math.byu.edu}
\address{Mathematics Department, Brigham Young University, Provo, UT 84602}
\author{Gradin Anderson}
\email{gradinmanderson@gmail.com}
\address{Mathematics Department, Brigham Young University, Provo, UT 84602}
\author{Amy Woodall}
\email{amyew3@illinois.edu}
\address{Mathematics Department, University of Illinois at Urbana-Champaign, Urbana, IL 61801}

\begin{abstract}
Let $L$ be an even lattice of odd rank with discriminant group $L'/L$, and let $\alpha,\beta \in L'/L$.
We prove the Weil bound
for the Kloosterman sums $S_{\alpha,\beta}(m,n,c)$ of half-integral weight for the Weil Representation attached to $L$.
We obtain this bound by proving an identity that relates a divisor sum of Kloosterman sums to a sparse exponential sum.
This identity generalizes Kohnen's identity for plus space Kloosterman sums with the theta multiplier system.
\end{abstract}

\maketitle

\section{Introduction}

In 1926, Kloosterman \cite{kloosterman} introduced his eponymous exponential sum
\begin{equation} \label{eq:kloo-def-ordinary}
    S(m,n,c) = \sum_{d(c)^\times} e_c(m\bar d + n d), \qquad e_c(x) = e^{2\pi i x/c},
\end{equation}
in order to apply the circle method to the problem of representations of integers by quaternary quadratic forms.
Here the $\times$ superscript indicates that we sum over $d\in (\Z/c\Z)^\times$, and $\bar d$ denotes the inverse of $d$ modulo $c$.
Kloosterman proved that $S(m,n,p) \ll  p^{3/4}$ for any prime $p$.
This was subsequently improved by Weil in \cite{weil} to the sharp bound
\[
    |S(m,n,p)|\leq 2p^{\frac 12}.
\]
A theorem of Katz (see \cite{katz} and \cite{adolphson}) asserts that the Kloosterman angles $\theta_p(n)$ defined by the relation $S(1,n,p) = 2\sqrt p\cos \theta_p(n)$ are equidistributed with respect to the Sato--Tate measure.
For $c=p^\lambda$ with $\lambda \geq 2$, the Kloosterman sum can be evaluated explicitly (see Chapter~4 of \cite{iwaniec}) and for all $c>0$ we have
\[
    |S(m,n,c)| \leq \tau(c) (m,n,c)^{\frac 12} c^{\frac 12},
\]
where $\tau(c)$ is the number of divisors of $c$.
This inequality is called the Weil bound.

In this paper we prove a similar bound for Kloosterman sums of half-integral weight for the Weil representation,
which are defined precisely in Section~\ref{sec:background}.
Briefly, let $L$ be an even lattice with determinant $\Delta$ and discriminant group $L'/L$.
Then $|L'/L|=|\Delta|$.
For $\alpha,\beta\in L'/L$, let $\rho_{\alpha\beta}$ denote the coefficients of the Weil representation $\rho_L$.
These coefficients are given by an explicit exponential sum \eqref{eq:rho-alpha-beta-formula} involving values of the quadratic form $q:L'/L\to \Q/\Z$.
Suppose that $c \in \Z^+$, $\frac{m}{2\Delta}\in \Z+q(\alpha)$, and $\frac{n}{2\Delta}\in \Z+q(\beta)$, and let $k$ be a half-integer satisfying \eqref{eq:sigma-consistency}. 
We define the generalized Kloosterman sum as
\begin{equation} \label{eq:kloo-weil-def}
    S_{\alpha,\beta}(m,n,c) = e^{-\pi ik/2} \sum_{d(c)^\times} \bar \rho_{\alpha\beta}(\tilde\gamma) e_{2\Delta c}\left(ma+nd\right),
\end{equation}
where $\gamma=\ptmatrix abcd\in \SL_2(\Z)$ is any matrix with bottom row $(c\ d)$ and $\tilde\gamma = (\gamma,\sqrt{cz+d})$.
The coefficients $\rho_{\alpha\beta}$ satisfy $\rho_{\alpha\beta}(\tilde\gamma)\ll_L 1$, so the trivial bound $S_{\alpha,\beta}(m,n,c) \ll_L c$ holds.

Let $g$ denote the rank of $L$ and suppose that $g$ is odd.
In Section~\ref{sec:background} we show that
\[
    (-1)^{(g-1)/2} m\equiv 0,1\pmod{4}.
\]
In the following theorem, $\omega(c)$ denotes the number of distinct primes dividing $c$.

\begin{theorem} \label{thm:main-weil-bound}
    Suppose that $g$ is odd.
    Let $\alpha,\beta\in L'/L$ and let $m,n$ be integers satisfying $\tfrac{m}{2\Delta}\in \Z+q(\alpha)$ and $\tfrac{n}{2\Delta}\in \Z+q(\beta)$.
    Write $m=m_0v^2$ where $(-1)^{(g-1)/2}m_0$ is a fundamental discriminant.
    If $(v,\Delta)=1$ then
    \begin{equation} \label{eq:main-weil-bound}
        S_{\alpha,\beta}(m,n,c) \ll_L 2^{\omega(c)} \tau((v,c))  (m_0n,c)^{\frac 12} c^{\frac 12}.
    \end{equation}
\end{theorem}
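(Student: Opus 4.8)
The plan is to reduce to prime-power moduli by a twisted multiplicativity, then at each prime to prove an identity of Kohnen type that trades a divisor sum of the sums \eqref{eq:kloo-weil-def} for a sparse exponential sum exhibiting manifest square-root cancellation, and finally to invert that identity to isolate the bound for a single modulus. First I would establish that $S_{\alpha,\beta}(m,n,c)$ obeys a twisted multiplicativity: if $c=c_1c_2$ with $(c_1,c_2)=1$, then $S_{\alpha,\beta}(m,n,c)$ factors as a product of sums to moduli $c_1,c_2$, with $m$ and $n$ twisted by the inverses $\bar c_2,\bar c_1$ and the Weil-representation data localized at each prime. This follows from the Chinese Remainder Theorem applied to the $d(c)^\times$ summation, the cocycle property of $\tilde\gamma\mapsto\rho_{\alpha\beta}(\tilde\gamma)$, and the splitting of $e_{2\Delta c}(ma+nd)$. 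Granting this, it suffices to bound $S_{\alpha,\beta}(m,n,p^\lambda)$ for each prime power, and the factor $2^{\omega(c)}$ in \eqref{eq:main-weil-bound} is accounted for by allowing a bounded number of terms at each prime.

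The heart of the argument is the identity generalizing Kohnen's. For an odd prime $p\nmid\Delta$ I would show that a suitable divisor sum of Kloosterman sums equals a sparse exponential sum, concretely a quadratic Gauss sum times a short character/cosine factor in which the number of nonzero terms is bounded independently of $p^\lambda$. The mechanism is that the half-integral weight multiplier, expanded via the explicit formula \eqref{eq:rho-alpha-beta-formula} for $\rho_{\alpha\beta}$, contributes a quadratic Gauss sum in the inner variable; completing the square collapses the Kloosterman sum to a Sali\'e-type sum that evaluates in closed form. I would derive this first for $\lambda=1$ from the classical Sali\'e evaluation together with the value of the quadratic Gauss sum, and then for $\lambda\geq 2$ by a $p$-adic stationary-phase evaluation of the resulting exponential sum.

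From the sparse side each Gauss-sum term has modulus $p^{\lambda/2}$, so the identity yields a bound of the shape $(m_0n,p^\lambda)^{1/2}\,p^{\lambda/2}$; the gcd factor records the $p$-divisibility of $m_0$ and $n$, and the normalization $m=m_0v^2$ with $(-1)^{(g-1)/2}m_0$ a fundamental discriminant is exactly what keeps the completed-square Gauss sum nondegenerate away from $\Delta$. To pass from the divisor sum to a single $S_{\alpha,\beta}(m,n,p^\lambda)$, I would invert the divisor relation by M\"obius inversion over the divisors of $(v,p^\lambda)$; this inversion over the square part $v$ is what produces the $\tau((v,c))$ factor. Reassembling over all primes by the twisted multiplicativity of the first step then gives \eqref{eq:main-weil-bound}.

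The main obstacle is proving the Kohnen-type identity in the full generality of an arbitrary even lattice of odd rank, rather than in Kohnen's fixed plus-space setting with the theta multiplier. The delicate points are the explicit local evaluation of the coefficients $\rho_{\alpha\beta}(\tilde\gamma)$ at $p=2$ and at the ramified primes dividing $\Delta$, and the verification that the completed-square Gauss sum is nondegenerate precisely under the hypothesis $(v,\Delta)=1$. At $p=2$ and at primes dividing $\Delta$ the Sali\'e evaluation must be replaced by a direct $p$-adic computation of the relevant exponential sums, and I expect these cases to require the most care.
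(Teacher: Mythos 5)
Your skeleton matches the paper's in its broad strokes (a Kohnen-type identity trading a divisor sum of Kloosterman sums for a sparse exponential sum, followed by M\"obius inversion producing the $\tau((v,c))$ factor), but your first step is a genuine gap. You assert a twisted multiplicativity for $S_{\alpha,\beta}(m,n,c)$ itself under $c=c_1c_2$, $(c_1,c_2)=1$, and wave at CRT plus the cocycle property. In the metaplectic/Weil-representation setting this is not an innocuous computation: splitting Shintani's formula \eqref{eq:rho-alpha-beta-formula} by CRT produces the normalizing constants $i^{(b^--b^+)/2}$, $c^{-g/2}$, $|L'/L|^{-1/2}$ only once rather than once per factor, so any factorization of $\rho_{\alpha\beta}(\tilde\gamma)$ carries root-of-unity and Gauss-sum corrections that must be computed exactly and tracked through every subsequent step. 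The paper never proves or uses such a statement; it proves the identity of Theorem~\ref{thm:exp-sum-general} for all $c$ simultaneously, Fourier-transforms in $v$, and only then localizes to prime powers, using the far weaker fact that the elementary counting function $N(t)$ of \eqref{eq:Nt-def} is multiplicative by CRT. Relatedly, you give the hypothesis $(v,\Delta)=1$ the wrong job: in the paper it is used to write $\alpha=v\alpha'$ for some $\alpha'\in L'/L$, without which the inverted identity (Corollary~\ref{cor:mobius-inv}), which bounds $S_{\alpha' v,\beta}(m_0v^2,n,c)$, never produces the sum $S_{\alpha,\beta}(m,n,c)$ that the theorem is about.

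The second, more serious gap is at the bad primes when $g>1$. You propose to handle $p=2$ and $p\mid\Delta$ by ``direct $p$-adic computation'' of the relevant exponential sums. For $g=1$ that is indeed what the paper does (Section~\ref{sec:rank-1}), but for an arbitrary even lattice of odd rank an explicit closed-form evaluation at ramified primes is precisely what is out of reach; this is why Theorem~\ref{thm:exp-sum-general} defines the sparse-sum weights at bad primes not by a character formula but by $p^{-\lambda(g+1)/2}\lrp{N(p^\lambda)-p^gN(p^{\lambda-1})}$, and why the technical heart of the paper (Lemmas~\ref{lem:N=M-lambda/2}--\ref{lem:M-bound}) is a stratified counting argument, via the sets $\mathcal M_j(p^k)$, showing that this quantity vanishes unless $\ell^2\equiv mn\pmod{2Np^{2\lfloor\lambda/2\rfloor}}$ and is $O_L(1)$ otherwise. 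Without this idea, or a substitute for it, your plan controls the good primes (where your Sali\'e/stationary-phase outline is essentially the paper's Section~\ref{sec:complete-square}) but has no mechanism to establish either the sparseness of the sum or the boundedness of its terms at the primes dividing $2\Delta$, which is exactly where the factor $(m_0n,c)^{1/2}2^{\omega(c)}$ and the implied constant $\ll_L 1$ have to come from.
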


\begin{remark*}
    The implied constant is of the form $|\Delta|^{Ag}$ for some absolute constant $A$, which can be computed explicitly using the results of Section~\ref{sec:counting}.
\end{remark*}

\begin{remark*}
    It is likely that a bound similar to \eqref{eq:main-weil-bound} holds when $g$ is even, but the methods of this paper are not suited to that case.
\end{remark*}

We will deduce \eqref{eq:main-weil-bound} from an identity that relates a sum of Kloosterman sums to a sparse exponential sum.
In Section~\ref{sec:exp-sum-identity} we state this identity and use it to prove Theorem~\ref{thm:main-weil-bound}.
In Section~\ref{sec:background} we provide background on the discriminant group of a lattice, the Weil representation, and Gauss sums.
The proof of the exponential sum identity is Section~\ref{sec:proof-identity}.

\section{The exopnential sum identity} \label{sec:exp-sum-identity}

We begin by discussing Kohnen's identity \cite[Proposition~5]{kohnen} for plus space Kloosterman sums with the theta multiplier system and a similar identity for the eta multiplier system proved by the first author \cite[Proposition~6]{andersen}.
Let $k\in \R$ and let $\nu$ be a multiplier system (see \cite[Section~2.6]{iwaniec}) of weight $k$ on a congruence subgroup $\Gamma\subseteq \SL_2(\Z)$ containing $T=\ptmatrix 1101$.
If $\Gamma_\infty=\langle \pm T \rangle$ denotes the stabilizer of $\infty$ in $\Gamma$, define the Kloosterman sum with multiplier system $\nu$ by
\begin{equation}
    S(m,n,c,\nu) = \sum_{\gamma = \ptmatrix{a}{b}{c}{d}\in \Gamma_\infty \backslash \Gamma / \Gamma_\infty} \bar \nu(\gamma) e_c(ma+nd).
\end{equation}
Here the sum is only well-defined if $m,n\in \Q$ satisfy a consistency condition involving the value of $\nu(T)$.
These Kloosterman sums appear in the Fourier coefficients of Poincar\'e series of weight $k$ with multiplier system $\nu$.
See \cite{selberg} for more details.

Let $\nu_\theta$ denote the multiplier system of weight $\frac 12$ on $\Gamma_0(4)$ for the theta function $\theta(z) = \sum_{n\in \Z}e(n^2z)$.
Kohnen identified a distinguished subspace of modular forms for $\nu_\theta$ which he called the plus space.
Projection of Poincar\'e series to the plus space naturally introduces the modified Kloosterman sums
\begin{equation}
    S^+(m,n,4c,\nu_\theta) = (1-i) S(m,n,4c,\nu_\theta) \times
    \begin{cases}
        1 & \text{ if } c \text{ is even}, \\
        2 & \text{ if } c \text{ is odd}.
    \end{cases}
\end{equation}
Kohnen's identity relates these plus space Kloosterman sums to a sparse quadratic Weyl sum.
To precisely state the identity, we first fix some notation.
Suppose that $N$ is a positive integer and that $m$ and $n$ are squares modulo $4N$.
Further suppose that $m$ is a fundamental discriminant. 
Then $m\equiv 0,1\pmod{4}$ and either $m$ is odd and squarefree, or $\frac m4 \equiv 2,3\pmod{4}$ and $\frac m4$ is squarefree.
Following \cite[Section I.2]{GKZ}, if $b^2-4Nac=mn$, define
\begin{equation}
    \chi_m(aN,b,c) = 
    \begin{cases}
        \pfrac mr & \text{ if } (a,b,c,m)=1, \\
        0 & \text{ otherwise},
    \end{cases}
\end{equation}
where, in the first case, $r$ is any integer coprime to $m$ represented by the quadratic form $aN_1x^2+bxy+cN_2y^2$, for some splitting $N=N_1N_2$, $N_1>0$.
Proposition~1 of \cite{GKZ} gives several properties of $\chi_m(aN,b,c)$, including that it is well-defined and independent of choice of splitting $N=N_1N_2$.

\begin{proposition*}[Kohnen, Proposition~5 of \cite{kohnen}]
    Suppose that $m,n\equiv 0,1\pmod{4}$ and that $m$ is a fundamental discriminant.
    Then for all $v\in \Z$ we have
\begin{equation} \label{eq:kohnen-id}
    \sum_{u\mid (v,c)} \pfrac mu \sqrt{\frac uc} \, S^+\!\left(\mfrac{mv^2}{u^2},n,\mfrac{4c}u,\nu_\theta\right) = 4\sum_{\substack {\ell(2c) \\ \ell^2 \equiv mn(4c)}} \chi_m\left(c,\ell,\mfrac{\ell^2-mn}{4c}\right)e_{2c}(\ell v).
\end{equation}
\end{proposition*}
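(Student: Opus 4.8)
The plan is to evaluate each half-integral weight Kloosterman sum on the left-hand side as a Salié sum—turning it into a sparse exponential sum—and then to reorganize the resulting divisor sum so that the accumulated quadratic symbols assemble into the genus character $\chi_m$ on the right. The key point that makes this feasible is that in weight $\tfrac12$ the theta-multiplier Kloosterman sum \emph{is} a Salié sum, hence admits an explicit evaluation.

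First I would unfold all the definitions. Writing the theta multiplier of $\gamma=\ptmatrix{a}{b}{4c'}{d}\in\Gamma_0(4)$ as $\nu_\theta(\gamma)=\pfrac{4c'}{d}\varepsilon_d^{-1}$, using $\pfrac{4}{d}=1$ for odd $d$, and replacing $a$ by $\bar d$ modulo $4c'$, the Kloosterman sum $S(M,n,4c',\nu_\theta)$ becomes the twisted sum $\sum_{d(4c')^\times}\pfrac{c'}{d}\varepsilon_d\,e_{4c'}(M\bar d+nd)$. Converting $\pfrac{c'}{d}\varepsilon_d$ into a standard quadratic character via reciprocity (which introduces sign factors depending on $c',d\bmod 4$), and absorbing the factor $1-i$ together with the even/odd-$c$ normalization from the definition of $S^+$, puts each term into a normalized Salié form.

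Next I would apply the explicit Salié evaluation: completing the square and evaluating the attached quadratic Gauss sum expresses each sum as a Gauss-sum constant times $\sqrt{c'}$ times a sparse sum $\sum_{y^2\equiv Mn}e_{4c'}(2y)$ over solutions of the quadratic congruence. Here the shape of the right-hand side already emerges. With $M=mv^2/u^2$ and $c'=c/u$, the solutions take the form $y\equiv\tfrac{v}{u}\ell$ for $\ell^2\equiv mn$, and the phase collapses exactly as $e_{4c/u}\!\left(2\cdot\tfrac{v}{u}\ell\right)=e_{2c}(\ell v)$, matching the exponential on the right. Moreover the prefactor $\sqrt{u/c}$ cancels the $\sqrt{c'}=\sqrt{c/u}$ produced by the Gauss sum, so that every term of the divisor sum carries the same normalization.

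The reorganization is the step I expect to be the main obstacle. After evaluation the left-hand side is a double sum over divisors $u\mid(v,c)$ and over lifts of solutions of $\ell^2\equiv mn$ to the modulus $4c/u$, each weighted by $\pfrac{m}{u}$ together with the Gauss-sum and reciprocity phases. I would reindex by a single residue $\ell\pmod{2c}$ with $\ell^2\equiv mn\pmod{4c}$ and collect, for fixed $\ell$, all divisors $u$ that contribute. The crux is to show that the resulting weighted sum of quadratic symbols collapses to the genus character value $\chi_m\!\left(c,\ell,\tfrac{\ell^2-mn}{4c}\right)$. For this I would invoke Proposition~1 of \cite{GKZ}: the well-definedness, multiplicativity, and splitting-independence of $\chi_m$ reduce the comparison to a prime-by-prime matching against the local Gauss-sum and reciprocity computations. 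Confirming that the residual constants, the $\varepsilon$ and $1-i$ factors, and the even/odd-$c$ normalizations conspire to produce precisely the factor $4$ on the right is the delicate bookkeeping that I expect to dominate the argument.
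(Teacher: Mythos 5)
Your central step---evaluating each theta-multiplier Kloosterman sum on the left as a Sali\'e sum with a closed-form answer---is only valid where this identity has no content, and it fails exactly where the content lies. The classical Sali\'e evaluation (a Gauss-sum constant times $\sqrt{c'}$ times $\sum_{y^2\equiv Mn\,(4c')}e_{4c'}(2y)$ with \emph{unit} weights) requires at least one of $M=mv^2/u^2$, $n$ to be coprime to the modulus $4c/u$; at primes dividing $(mn,c)$, and at $p=2$, no such formula holds. The right-hand side itself shows the answer cannot have that shape there: by Proposition~1 of \cite{GKZ}, at primes $p\mid(m,c)$ the weight $\chi_m\bigl(c,\ell,(\ell^2-mn)/4c\bigr)$ contains the factor $\pfrac{p^*}{(\ell^2-mn)/p^{\lambda+\nu}}$, which changes sign as $\ell$ varies over the solutions, whereas any Sali\'e-type evaluation weights all solutions equally. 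A second problem is your reindexing $y\equiv\tfrac vu\ell$: once $v/u$ shares a factor with $4c/u$ this is not a bijection of solution sets. For instance, with $u=1$, $p\mid(v,c)$ and $p\nmid mn$, the congruence $y^2\equiv mnv^2\pmod{4c}$ has far more solutions than $\ell^2\equiv mn\pmod{4c}$, so individual terms of the divisor sum contribute phases $e_{2c}(\ell v)$ with $\ell$ \emph{outside} the right-hand solution set, and these must cancel across different $u$. Establishing that cancellation and the bad-prime local factors is not ``delicate bookkeeping''---it is the theorem---and the tool you invoke for it (well-definedness and splitting-independence of $\chi_m$ from \cite{GKZ}) is not capable of producing it.

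The paper's own route (it proves the generalization, Theorem~\ref{thm:exp-sum-identity-g=1}, in Section~\ref{sec:proof-identity}) is structured precisely to avoid evaluating any single Kloosterman sum. Both sides are periodic in $v$ of period $2Nc$, so it suffices to match Fourier coefficients in $v$; taking the transform of the \emph{entire} divisor sum merges the $u$-, $v$-, and $d$-summations into a complete two-dimensional quadratic Gauss sum $\mathcal S(d,u)$ over $(v,r)$, and a Ramanujan-sum identity converts this into solution counts $N(t)$ of a quadratic congruence, which are multiplicative in $c$. The identity then reduces to prime-power computations: at good odd primes one completes the square (Lemma~\ref{lem:w-def}) and applies \eqref{eq:Gauss-sum-quadratic-form}; at $p\mid m$ the sign $\pfrac{p^*}{(\ell^2-mn)/p^{\lambda}}$ comes out of the twisted character sum \eqref{eq:twisted-exp-sum}; and $p=2$ is handled via \eqref{eq:gauss-even}. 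Your plan has no counterpart for these computations; salvaging a term-by-term strategy would require Sali\'e-type evaluations with non-coprime parameters, which is essentially equivalent to redoing the paper's local analysis.
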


The sum on the right-hand side of \eqref{eq:kohnen-id} is quite small in absolute value; in particular, if $(mn,4c)=1$, the sum is $\ll c^\ep$.
By Mobi\"us inversion in two variables (see Corollary~\ref{cor:mobius-inv} below) it follows that $S^+(m,n,4c,\nu_\theta) \ll c^{1/2+\ep}$.

\begin{remark*}
    One notable use of
    Kohnen's identity is by Duke, Imamo\=glu, and T\'oth  \cite{DIT1,DIT2} as a bridge connecting coefficients of  half-integral weight forms and cycle integrals of weight zero forms.
\end{remark*}

In \cite{andersen} the first author proved an analogue of Kohnen's identity for Kloosterman sums with the Dedekind eta multiplier system $\nu_\eta$ of weight $\frac 12$ on $\SL_2(\Z)$.
Up to a constant, $S(\frac{1}{24},\frac{1}{24}-n,c,\nu_\eta)$ equals the sum $A_c(n)$ appearing in the Hardy-Ramanujan-Rademacher formula for the partition function $p(n)$ (see \cite{rademacher,AA}).

\begin{proposition*}(Andersen, Proposition~6 of \cite{andersen}) 
    Suppose that $m,n\equiv 1\pmod{24}$ and that $m$ is a fundamental discriminant. Then for all $v\in \Z$ with $(v,6)=1$ we have
    \begin{multline}
    2\sqrt{-3i}\sum_{u\mid (v,c)} \pfrac{12}{v/u} \pfrac{m}{u} \sqrt{\frac uc} \, S\left(\mfrac{mv^2}{24u^2}, \mfrac{n}{24}, \mfrac{c}{u},\nu_\eta\right) \\ = \sum_{\substack{\ell(12c) \\ \ell^2\equiv mn(24c)}} \pfrac{12}{\ell} \chi_m\left(6c,\ell,\mfrac{\ell^2-mn}{24c}\right)  e_{12c}(\ell v).
    \end{multline}
\end{proposition*}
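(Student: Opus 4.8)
The plan is to reduce the left-hand side to a single Salié sum and then \emph{linearize} that sum with a quadratic Gauss sum, which is exactly what produces the sparse sum over square roots $\ell^2 \equiv mn \spmod{24c}$ on the right. First I would unfold each Kloosterman sum $S\lrp{\tfrac{mv^2}{24u^2},\tfrac{n}{24},\tfrac{c}{u},\nu_\eta}$ using the explicit Dedekind-eta multiplier: for $\gamma=\ptmatrix abcd\in\SLtZ$ with $c>0$ one has $\nu_\eta(\gamma)=\exp\!\lrp{\pi i\lrp{\tfrac{a+d}{12c}-s(d,c)-\tfrac14}}$, where $s(d,c)$ is the Dedekind sum. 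The crucial move is to replace the arithmetically opaque factor $\exp(-\pi i\,s(d,c))$ by an elementary expression. This is precisely where the Gauss-sum computations of Section~\ref{sec:background} enter: via reciprocity, the exponential of a Dedekind sum can be rewritten as a normalized quadratic Gauss sum, equivalently a Kronecker symbol $\pfrac{d}{c}$ times an explicit eighth root of unity $\epsilon_c$ (with $\epsilon_c=1$ or $i$ according as $c\equiv 1$ or $3\spmod 4$) and a correction at the primes $2$ and $3$. After this substitution, and using $a\equiv\bar d\spmod{c}$, each eta-multiplier Kloosterman sum acquires the shape of a Salié sum $\sum_{d(c)^\times}\pfrac{d}{c}e_c(A\bar d+Bd)$ with $A,B$ proportional to $m$ and $n$.

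Second, I would reorganize the divisor sum over $u\mid(v,c)$. Exactly as in Kohnen's identity, its role is to account for the residues that drop out because of the common factor $(v,c)$; after the Salié sums are linearized, the contributions for the various $u$ reassemble, through the multiplicativity of the genus character, into a single sum to modulus $24c$ whose phase in $v$ has become \emph{linear}. This is what lets the scattered data $\tfrac{mv^2}{24u^2}$ on the left collapse into the clean linear exponential $e_{12c}(\ell v)$ on the right, with the accumulated factors $\sqrt{u/c}$ telescoping into the global normalization $2\sqrt{-3i}$.

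Third, and this is the analytic heart of the argument, I would evaluate the Salié sum explicitly. Unlike a generic Kloosterman sum, a Salié sum linearizes: completing the square and applying the quadratic Gauss-sum evaluation turns $\sum_{d(c)^\times}\pfrac{d}{c}e_c(A\bar d+Bd)$ into $\pfrac{B}{c}\epsilon_c\sqrt{c}\sum_{x^2\equiv AB\,(c)}e_c(2x)$ for odd $c$. With $A,B$ proportional to $m,n$, the congruence $x^2\equiv AB$ becomes $\ell^2\equiv mn\spmod{24c}$, which is exactly the support of the right-hand sum, and the factor $\sqrt c$ supplies the needed half-integral normalization.

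Finally, I would identify the leftover quadratic character with the genus character $\chi_m$. Each square root $\ell$ determines a binary quadratic form $6c\,x^2+\ell\,xy+\tfrac{\ell^2-mn}{24c}\,y^2$ of discriminant $mn$, and the product of Kronecker symbols surviving the Gauss-sum manipulations is precisely $\chi_m\!\lrp{6c,\ell,\tfrac{\ell^2-mn}{24c}}$, as characterized by Proposition~1 of \cite{GKZ}; the additional factor $\pfrac{12}{\ell}$ comes from the $2$- and $3$-parts of the eta multiplier. Verifying this identification—matching every quadratic symbol, every eighth root of unity, the sign of $\epsilon_c$, and the phases $e^{-\pi i k/2}$ across all the Gauss-sum reductions, while correctly handling the primes $2$ and $3$ (the source of the level-$24$ behavior and the hypothesis $(v,6)=1$)—is the main obstacle. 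The well-definedness and multiplicativity of $\chi_m$ established in \cite{GKZ} are exactly what make this bookkeeping tractable.
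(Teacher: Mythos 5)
Your route is genuinely different from the one this paper takes: here the proposition is quoted from \cite{andersen}, and within this paper's framework it would follow from Theorem~\ref{thm:exp-sum-identity-g=1} (the rank-one identity applied to the lattice $\Z$ with $\vprod{x,y}=12xy$, i.e.\ $N=6$) together with the relation \eqref{eq:eta-weil-connection} expressing $S(\tfrac m{24},\tfrac n{24},c,\nu_\eta)$ through the Weil-representation sums $S_{\alpha,\beta}$. That proof never touches Dedekind sums or Sali\'e sums; it takes the Fourier transform in $v$ of the entire divisor sum, reorders the nested sums, and reduces everything to a two-dimensional quadratic Gauss sum $\mathcal S(d,u)$ evaluated prime by prime. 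Your classical outline (explicit eta multiplier, Sali\'e linearization, genus-character identification) is the style of the original arguments of Kohnen and Andersen, so it is a viable route in principle — but as written it has two genuine gaps.

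First, the Sali\'e evaluation you invoke, $\sum_{d(c)^\times}\pfrac dc e_c(A\bar d+Bd)=\pfrac Bc \varepsilon_c\sqrt c\sum_{x^2\equiv AB\,(c)}e_c(2x)$, is valid only under a coprimality hypothesis such as $(B,c)=1$; it fails precisely at the ramified primes $p\mid(mn,c)$, where $\pfrac Bc=0$ while the sum need not vanish, and where the genus character takes its nontrivial form $\pfrac{m/p^*}{p^{\lambda+\nu}}\pfrac{p^*}{(\ell^2-mn)/p^{\lambda+\nu}}$. Those primes are not bookkeeping: they are where the content of $\chi_m$ lies, and they require a separate computation (in this paper it is the case analysis of Sections~\ref{sec:complete-square}--\ref{sec:rank-1}, using \eqref{eq:twisted-exp-sum}). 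Second, your step reorganizing the divisor sum — that the contributions of the various $u\mid(v,c)$ ``reassemble, through the multiplicativity of the genus character, into a single sum to modulus $24c$'' — asserts the identity rather than proving it. After Sali\'e evaluation, each $u$ contributes square roots of $mnv^2/u^2$ modulo $24c/u$, and showing that the resulting double sum over pairs $(u,\ell_u)$ collapses to the single sparse sum over $\ell\ (12c)$ with weights $\pfrac{12}{\ell}\chi_m$ is the combinatorial heart of the proposition. The mechanism that accomplishes this in the paper's proof of the generalization is Fourier analysis in $v$: both sides are periodic in $v$, so it suffices to match Fourier coefficients, which diagonalizes the divisor sum and reduces the identity to one Gauss-sum computation per prime power. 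Without that step, or an equivalent recombination argument (e.g.\ Hensel-type counting of how square roots modulo $24c/u$ lift to square roots modulo $24c$), your outline does not close.
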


The Kloosterman sums appearing in these identities can be written as linear combinations of the sums \eqref{eq:kloo-weil-def}.
We indicate how this is done for $\nu_\eta$; the construction is similar for $\nu_\theta$.
Let $L$ denote the lattice $\Z$ with bilinear form $\vprod {x,y} = 12xy$ (use $\vprod{x,y}=2xy$ instead for $\nu_\theta$).
The dual lattice is $L'=\frac{1}{12}\Z$, so for $\alpha,\beta\in L'/L$ we can write $\alpha = \frac{h}{12}$ and $\beta=\frac{j}{12}$ for $h,j\in \Z/12\Z$.
In Section~\ref{sec:background} we will prove that
for any $h\in \Z/12\Z$ with $(h,6)=1$ we have
\begin{equation} \label{eq:eta-weil-connection}
    S\left(\tfrac{m}{24},\tfrac{n}{24},c,\nu_\eta\right) = \pfrac{12}{h}\sum_{j(12)} \pfrac{12}{j} S_{\alpha,\beta}(m,n,c).
\end{equation}

Our first version of the exponential sum identity is a direct generalization of Kohnen's identity for even lattices of rank $1$.
In this case, without loss of generality we can take $L=\Z$ and $\vprod{x,y}=\Delta xy$, where $\Delta=\pm 2N$ for some $N\in \Z^+$.
Then $q(x) := \frac 12 \vprod{x,x} = \frac 12 \Delta x^2$ and we can write $\alpha,\beta\in L'/L$ as $\alpha = \frac{a}{\Delta}$ and $\beta = \frac{b}{\Delta}$ for $a,b\in \Z/2N\Z$.
Define $\sigma$ by \eqref{eq:sigma-consistency}.

\begin{theorem} \label{thm:exp-sum-identity-g=1}
    Suppose that $L$ has rank $1$.
Let $\alpha=\frac{a}{\Delta},\beta=\frac{b}{\Delta}\in L'/L$, and let $m,n$ be integers satisfying $m\equiv a^2\pmod{4N}$ and $n \equiv b^2 \pmod{4N}$.
Suppose that $m$ is a fundamental discriminant.
Then for any $v\in \Z$ and any $c\geq 1$ we have
\begin{equation}
    \sum_{u\mid (v,c)} \pfrac{m}{u} \sqrt{\frac uc} \, S_{\alpha \frac vu, \beta}\left( mv^2/u^2, n, \mfrac cu \right) \\
    = \frac{i^{-\sigma}}{\sqrt{2N}}\sum_{\substack{\ell(2Nc) \\ \ell \equiv ab(2N) \\ \ell^2\equiv mn(4Nc) }} 
    \chi_{m}\left(Nc,\ell,\mfrac{\ell^2-mn}{4Nc}\right) e_{\Delta c}(\ell v).
    \label{eq:main-identity}
\end{equation}
\end{theorem}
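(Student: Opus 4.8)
The plan is to follow the strategy behind Kohnen's proof of \eqref{eq:kohnen-id}, with the Weil representation of the rank-one lattice $L=\Z$, $\vprod{x,y}=\Delta xy$, playing the role of the theta multiplier $\nu_\theta$. The engine of the argument is the explicit evaluation of a quadratic Gauss sum: for a one-dimensional lattice the coefficient $\bar\rho_{\alpha\beta}(\tilde\gamma)$ in \eqref{eq:kloo-weil-def} is, by the formula \eqref{eq:rho-alpha-beta-formula}, a single Gauss sum in one residue variable, normalized by a factor $1/\sqrt{c}$ and a root of unity depending on $\sigma$. First I would substitute this formula and the definition \eqref{eq:kloo-weil-def} into each sum $S_{\alpha\frac vu,\beta}(mv^2/u^2,n,c/u)$ on the left-hand side, turning it into a double sum over $d\pmod{c/u}^\times$ and over the internal Gauss-sum variable.

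Next I would carry out the sum over $d$. Since $m$ is a fundamental discriminant, the index $mv^2/u^2$ is a square times a fundamental discriminant, and this is precisely the situation in which a half-integral weight Kloosterman sum degenerates to a Sali\'e sum: after completing the square in the exponential and evaluating the inner Gauss sum using the Gauss-sum results of Section~\ref{sec:background}, the $d$-sum collapses to a Sali\'e sum $\sum_{d(c/u)^\times}\pfrac{d}{c/u}e(\cdots)$. The classical evaluation of a Sali\'e sum replaces it by a sum over the square roots $\ell$ of $mn$ to the relevant modulus, and the accompanying factor $\sqrt{c/u}$ cancels the $\sqrt{u/c}$ already present, which is why no power of $c$ survives on the right. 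This step is what produces the variable $\ell$ with $\ell^2\equiv mn\pmod{4Nc}$, the linear phase $e_{\Delta c}(\ell v)$ (the $v$-dependence being inherited from the index $mv^2/u^2$), and the congruence $\ell\equiv ab\pmod{2N}$ forced by the residue classes $\alpha=\frac a\Delta$ and $\beta=\frac b\Delta$.

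It then remains to reconcile the divisor sum $\sum_{u\mid(v,c)}\pfrac mu\sqrt{u/c}$ on the left with the single genus-character sum on the right. Here I would invoke the properties of $\chi_m$ from Proposition~1 of \cite{GKZ}: its well-definedness, its vanishing on imprimitive forms, its $\SL_2(\Z)$-invariance, and its multiplicativity. Tracking which square roots $\ell\pmod{2Nc/u}$ lift to square roots $\pmod{2Nc}$, and how the Legendre symbols $\pfrac{m}{c/u}$ emerging from the Sali\'e evaluations combine with the weights $\pfrac mu$, the weighted divisor sum should telescope into the single value $\chi_m(Nc,\ell,(\ell^2-mn)/(4Nc))$ of the genus character on the form of discriminant $mn$ with middle coefficient $\ell$. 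The fundamental-discriminant hypothesis on $m$ is exactly what guarantees that $\chi_m$ is a genuine multiplicative genus character, so it is indispensable at this step.

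I expect the principal obstacle to be the Gauss-sum evaluation together with the bookkeeping of root-of-unity factors in the second step: reducing the half-integral weight $d$-sum to a Sali\'e sum and evaluating it requires combining the prefactor $e^{-\pi i k/2}$, the normalizing roots of unity of the Weil representation, and the Gauss sums in just the right way to land on the constant $i^{-\sigma}/\sqrt{2N}$. A secondary difficulty is the case $(mn,4Nc)>1$, where the square-root count is less transparent and the vanishing of $\chi_m$ on imprimitive forms must be used carefully; I would handle this by first establishing the identity when $c$ is coprime to $2Nmn$, where the Sali\'e evaluation is cleanest, and then extending to general $c$ using the twisted multiplicativity of the Kloosterman sums in $c$ together with the corresponding factorization of the right-hand side by the Chinese remainder theorem.
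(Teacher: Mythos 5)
Your plan is architecturally different from the paper's proof, and the difference matters. The paper never evaluates a single Kloosterman sum: it observes that both sides of \eqref{eq:main-identity} are periodic in $v$ modulo $2Nc$ and proves the identity by matching Fourier coefficients in $v$. After Fourier inversion, the divisor sum over $u\mid(v,c)$ reorganizes into a sum over $u\mid c$, the $\ell$-th coefficient $\mathcal L_\ell(c)$ of the left-hand side becomes a combination of two-dimensional Gauss sums $\mathcal S(d,u)$ in the variables $(v,r)$, and pairing against $e_u(\tilde n d)$ via the Ramanujan sum converts everything into the solution-counting function $N(t)$ of \eqref{eq:Nt-def}, whose multiplicativity (Chinese remainder theorem) gives multiplicativity of $\mathcal L_\ell(c)$ in $c$ for free. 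This sidesteps both of your principal tools: the Sali\'e evaluation of an individual $S_{\alpha,\beta}$, and the ``twisted multiplicativity of the Kloosterman sums in $c$,'' which for Weil-representation sums is not a citable fact but something you would have to prove; the paper neither establishes nor needs it.

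The genuine gap is your treatment of the bad primes. You propose to prove the identity when $(c,2Nmn)=1$ and then ``extend to general $c$'' by twisted multiplicativity together with CRT factorization of the right-hand side. That extension is circular: any multiplicativity statement only reduces composite $c$ to its prime-power parts $p^\lambda\parallel c$; it cannot manufacture the identity at powers of primes dividing $2Nm$ out of the coprime case. Those local identities are precisely where your main engine fails --- when $p\mid(m,n,c)$ the relevant Sali\'e sum has both indices divisible by $p$ and admits no classical evaluation, and when $p=2$ the Gauss sums \eqref{eq:gauss-even} force a separate case analysis --- and they are exactly where $\chi_m$ takes its nontrivial local form $\pfrac{m/p^*}{p^{\lambda+\nu}}\pfrac{p^*}{(\ell^2-mn)/p^{\lambda+\nu}}$. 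In the paper these cases constitute the bulk of the proof: odd $p\mid m$ with $p\nmid\Delta$ at the end of Section~\ref{sec:complete-square}, and odd $p\mid(m,\Delta)$, $p=2$ with $m$ odd, and $p=2$ with $4\mid m$ in four subcases $(\mu,\nu)$ in Section~\ref{sec:rank-1}, all resting on the completing-the-square Lemmas~\ref{lem:w-def} and \ref{lem:w-def-g=1} and the twisted sums \eqref{eq:twisted-exp-sum} and \eqref{eq:twisted-exp-sum-2}. Your sketch contains no mechanism for these evaluations. Relatedly, your third step --- that the Kronecker-weighted divisor sum ``should telescope'' into the genus character --- is the actual content of the identity even at good primes; in the paper it is exactly what the Fourier-coefficient computation proves, so it cannot be assumed as bookkeeping.
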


By \cite[Proposition~1]{GKZ} the character $\chi_m$ can be computed using the formula
\begin{equation}
    \chi_m \left(Nc, \ell, \mfrac{\ell^2-mn}{4Nc}\right) \\= \prod_{\substack{p^\lambda\parallel c \\ p\nmid m}} \pfrac{m}{p^{\lambda}}
    \prod_{\substack{p^\lambda\parallel c \\ p\mid m}} \pfrac{m/p^*}{p^{\lambda+\nu}} \pfrac{p^*}{(\ell^2-mn)/p^{\lambda+\nu}}
\end{equation}
where $p^\nu\parallel 2\Delta$ and
\[
    p^* = 
    \begin{cases}
        (\frac{-1}{p})p & \text{ if $p$ is odd}, \\
        (\frac{-1}{m'})2^\mu & \text{ if $p=2$ and $m=2^\mu m'$ with $m'$ odd}.
    \end{cases}
\]

Our second version of the exponential sum identity holds for any lattice of odd rank at the cost of having less-precise information at the ``bad'' primes, i.e.~primes dividing $2(m,\Delta,c)$.
At these primes we will need to count the number of solutions to the quadratic congruence
\begin{equation}
    \tilde m x^2 - \vprod{\alpha,y}x - q(y) + \vprod{\beta,y} - \tilde \ell x + \tilde n \equiv 0\pmod{p^j},
\end{equation}
where $x\in \Z/p^j\Z$, $y\in L/p^j L$, and $\tilde m = \frac{m}{2\Delta}-q(\alpha)$, $\tilde \ell = \frac{\ell}{\Delta}-\vprod{\alpha,\beta}$, and $\tilde n = \frac{n}{2\Delta}-q(\beta)$.
(The quantities $\tilde m$, $\tilde \ell$, and $\tilde n$ are integers in each context in which this congruence appears.)
Let $N(p^j)$ denote the number of such solutions.
We define a function $\xi_{\alpha,\beta}(\ell,m,n,c)$ at prime powers $c=p^\lambda$ and extend to all $c$ multiplicatively.
Write $|\Delta|=2N$ and
\[
    m_L = (-4)^{(g-1)/2}m.
\]
(In Section~\ref{sec:background} we will show that $\Delta$ is even whenever $g$ is odd.)
In particular, note that $m_L=m$ when $g=1$.
\begin{enumerate}
    \item If $p$ is odd and $(m,\Delta,p)=1$ then
    \begin{equation}
        \xi_{\alpha,\beta}(\ell,m,n,p^\lambda) = 
        \begin{dcases}
            \pfrac{m_L}{p^\lambda}  & \text{ if } p\nmid m \text{ and } \ell^2\equiv mn\pmod{p^{\lambda+\nu}}, \\
            \pfrac{m_L/p^*}{p^{\lambda}} \pfrac{p^*}{(\ell^2-mn)/p^{\lambda}}  & \text{ if } p \mid m \text{ and } \ell^2\equiv mn\pmod{p^{\lambda}}, \\
            0 & \text{ otherwise}.
        \end{dcases}
    \end{equation}
    (Note that in the second case $\nu=0$.)
    \item If $p=2$ or if $p$ is odd and $(m,\Delta,p)>1$ then
    \begin{equation}
        \xi_{\alpha,\beta}(\ell,m,n,p^\lambda) =
        \begin{dcases}
        p^{-\lambda \frac{g+1}{2}} \left(N(p^\lambda) - p^gN(p^{\lambda-1})\right) & \text{ if }\ell^2 \equiv mn \pmod{2Np^{2\lf}}, \\
        0 & \text{ otherwise}.
        \end{dcases}
    \end{equation}
\end{enumerate}
Here $p^\nu \parallel 2\Delta$, as in the definition of $\chi_m$.

\begin{theorem} \label{thm:exp-sum-general}
    Suppose that $g:=\operatorname{rank} L>1$ is odd. 
    Let $\alpha,\beta \in L'/L$ and let $m,n$ be integers satisfying $\frac{m}{2\Delta}\in \Z+q(\alpha)$ and $\frac{n}{2\Delta} \in \Z+q(\beta)$.
    Suppose that $(-1)^{(g-1)/2}m$ is a fundamental discriminant. 
    Then for any $v\in \Z$ and any $c\geq 1$ we have
\begin{equation} \label{eq:exp-sum-general}
    \sum_{u\mid (v,c)} \pfrac{m_L}{u} \sqrt{\frac uc} \, S_{\alpha \frac vu, \beta}\left( mv^2/u^2, n, \mfrac cu \right) \\
    = \frac{i^{-\sigma}}{\sqrt{2N}} \sum_{\substack{\ell(2Nc) \\ \frac{\ell}{\Delta} \equiv \vprod{\alpha,\beta}(1)}} 
    \xi_{\alpha,\beta}(\ell,m,n,c) e_{\Delta c}(\ell v).
\end{equation}
Furthermore, for all $\ell,m,n,c$ we have
\begin{equation} \label{eq:xi-L-bound}
    \xi_{\alpha,\beta}(\ell,m,n,c)\ll_L 1.
\end{equation}
\end{theorem}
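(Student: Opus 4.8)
The plan is to prove the identity \eqref{eq:exp-sum-general} by reducing it to a purely local statement at each prime power dividing $c$, and then to read off the bound \eqref{eq:xi-L-bound} from a square-root estimate for complete quadratic Gauss sums. First I would establish that both sides factor over the prime powers of $c$: the function $\xi_{\alpha,\beta}$ is multiplicative in $c$ by definition, and applying CRT to $\ell\bmod 2Nc$ separates the local contributions at each prime, the factor $p^\nu\parallel 2\Delta$ and the constraint $\tfrac\ell\Delta\equiv\vprod{\alpha,\beta}\pmod 1$ governing the primes dividing $2N$. On the left, the Gauss-sum formula \eqref{eq:rho-alpha-beta-formula} for the Weil multiplier $\rho_{\alpha\beta}(\tilde\gamma)$ gives twisted multiplicativity of $S_{\alpha,\beta}(m,n,c)$ in $c$, and the divisor sum $\sum_{u\mid(v,c)}$ together with the twist $\alpha\mapsto\alpha\frac vu$ factors compatibly. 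Thus it suffices to prove \eqref{eq:exp-sum-general} when $c=p^\lambda$, the general case following by assembling the local identities.

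At $c=p^\lambda$ I would insert \eqref{eq:rho-alpha-beta-formula}, which writes the multiplier as a normalized sum over $y\in L/cL$ with a quadratic phase depending linearly on $a\equiv\bar d$. Substituting into \eqref{eq:kloo-weil-def} and performing the divisor sum yields a double sum over $d\in(\Z/c\Z)^\times$ and $y\in L/cL$ whose phase is exactly the quadratic polynomial $\tilde m x^2-\vprod{\alpha,y}x-q(y)+\vprod{\beta,y}-\tilde\ell x+\tilde n$ that defines $N(p^j)$, with the scalar $x$ playing the role of $a$ (or $\bar d$) and $y=r$ supplying the remaining $g$ variables. Detecting the frequency $\ell$ on the right by orthogonality matches the coefficient of $e_{\Delta c}(\ell v)$ on the two sides. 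At a good prime ($p$ odd, $(m,\Delta,p)=1$) the lattice form is nondegenerate mod $p$, so the inner Gauss sum diagonalizes into one-variable quadratic Gauss sums, which collapse to the Legendre-symbol expressions of case (1) and reproduce the local factors of $\chi_{m_L}$ exactly as in Theorem~\ref{thm:exp-sum-identity-g=1}.

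The hard part will be the bad primes — $p=2$ and odd $p\mid(m,\Delta,c)$ — where the form degenerates and no closed-form diagonalization is available. There I would not evaluate the Gauss sum explicitly but instead use the elementary identity
\[
    N(p^\lambda)-p^gN(p^{\lambda-1}) = \frac{1}{p^\lambda}\sum_{\substack{t(p^\lambda)\\ p\nmid t}}\;\sum_{(x,y)} e_{p^\lambda}\!\left(t\,Q(x,y)\right),
\]
where $Q$ denotes the quadratic polynomial above, to recognize the primitive solution count as precisely the unit-frequency part of the local exponential sum. Matching this against the terms of the substituted Kloosterman sum that survive the good/bad split produces the normalization $p^{-\lambda(g+1)/2}$ and the formula for $\xi$ in case (2). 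Keeping track of $p^\nu\parallel 2\Delta$, the floor $\lf$ in the modulus, and the precise powers of $p$ is the delicate bookkeeping.

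Finally, for the bound \eqref{eq:xi-L-bound}, case (1) is immediate since the Legendre symbols are bounded by $1$. For case (2) I would apply the displayed identity: each complete Gauss sum $\sum_{(x,y)}e_{p^\lambda}(t\,Q)$ in $g+1$ variables with $p\nmid t$ has modulus $\ll_L p^{\lambda(g+1)/2}$ (the $L$-dependence absorbing the degeneracy of $Q$ modulo the primes dividing $2\Delta$), so summing trivially over the fewer than $p^\lambda$ unit residues $t$ and dividing by $p^\lambda$ gives $\abs{N(p^\lambda)-p^gN(p^{\lambda-1})}\ll_L p^{\lambda(g+1)/2}$, whence $\xi_{\alpha,\beta}\ll_L 1$. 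This square-root estimate for $N(p^j)$, with its explicit $|\Delta|$-dependence, is precisely what the counting results of Section~\ref{sec:counting} provide.
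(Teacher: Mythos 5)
Your overall skeleton matches the paper's: detect the frequency $\ell$ by orthogonality in $v$, insert Shintani's formula \eqref{eq:rho-alpha-beta-formula}, reduce to prime powers, evaluate the resulting $(g+1)$-variable quadratic Gauss sums at the good primes, and express the bad-prime contribution through the counting function $N(p^j)$. Indeed, your displayed identity relating $N(p^\lambda)-p^gN(p^{\lambda-1})$ to the unit-frequency exponential sum is exactly the paper's Ramanujan-sum step read in the opposite direction. Two remarks on the parts that differ. First, your route to \eqref{eq:xi-L-bound} at the bad primes --- square-root cancellation for the complete Gauss sums --- is a genuine and workable alternative to the paper's combinatorial argument (Lemmas \ref{lem:N=M-lambda/2} and \ref{lem:M-bound}): the quadratic part of $Q$ has Gram determinant $-m$, and since $(-1)^{(g-1)/2}m$ is fundamental we have $\operatorname{ord}_p(m)\leq 3$, so the degeneracy cost is bounded; note, however, that the degeneracy is governed by $m$, not by the primes dividing $2\Delta$ as you assert. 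Second, your reduction to prime powers leans on a ``twisted multiplicativity'' of $S_{\alpha,\beta}(m,n,c)$ which you assert but do not prove; the paper avoids this entirely by taking the Fourier transform in $v$ first and deducing multiplicativity of the resulting coefficients $\mathcal L_\ell(c)$ from the Chinese remainder theorem applied to $N(t)$. That substitution would repair this step.

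The genuine gap is at the bad primes, and it is not bookkeeping. In case (2), $\xi_{\alpha,\beta}(\ell,m,n,p^\lambda)$ is \emph{defined} to equal $p^{-\lambda(g+1)/2}\bigl(N(p^\lambda)-p^gN(p^{\lambda-1})\bigr)$ only when $\ell^2\equiv mn \pmod{2Np^{2\lf}}$, and to be zero otherwise. Your argument identifies the $\ell$-th Fourier coefficient of the left-hand side with $p^{-\lambda(g+1)/2}\bigl(N(p^\lambda)-p^gN(p^{\lambda-1})\bigr)$ for \emph{every} $\ell$ with $\frac{\ell}{\Delta}\equiv\vprod{\alpha,\beta}\pmod 1$; to obtain \eqref{eq:exp-sum-general} as stated you must additionally prove that this quantity vanishes whenever $\ell^2\not\equiv mn\pmod{2Np^{2\lf}}$. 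That vanishing statement is the content of Lemma \ref{lem:N=M-lambda/2} and the lemma following it, whose proofs require the auxiliary congruence conditions defining the sets $\mathcal M_j(p^k)$, the element $\gamma=\alpha v+r-\beta$, and Lemma \ref{lem:alpha-beta-gamma}; nothing in your Gauss-sum bound produces it, since a bound cannot detect exact cancellation. Moreover, this support condition is precisely what makes the right-hand side sparse, and it is used in the deduction of Theorem \ref{thm:main-weil-bound}, where the sum over $\ell$ is controlled by the number of solutions of $x^2\equiv m_0n\pmod{c}$. As written, your proposal proves an identity for a weaker function (supported on all admissible $\ell$), which neither matches the theorem nor suffices for the Weil bound.
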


\begin{remark*}
    The Kloosterman sums $S_{\alpha,\beta}(0,n,c)$ appear naturally in the Fourier coefficients of Eisenstein series for the Weil representation, which are studied in \cite{bruinier-kuss} and \cite{schwagenscheidt}.
    In the formulas given in those papers, quantities analogous to $N(p^\lambda)-p^g N(p^{\lambda-1})$ also appear at the bad primes. 
\end{remark*}

\begin{corollary} \label{cor:mobius-inv}
    With the assumptions of Theorem~\ref{thm:exp-sum-general}, we have
    \begin{equation}
        S_{\alpha v,\beta}\left(mv^2,n, c\right) \\=  \frac{i^{-\sigma}\sqrt c}{\sqrt{2N}} \sum_{u\mid(v,c)} \mu(u) \pfrac{m_L}{u}\sum_{\substack{\ell(2Nc/u) \\ \frac{\ell}{\Delta} \equiv \vprod{\alpha,\beta}(1)}} \xi_{\alpha,\beta}(\ell,m,n,c/u) e_{\Delta c}(\ell v).
    \end{equation}
\end{corollary}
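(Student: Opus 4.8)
The plan is to read \eqref{eq:exp-sum-general} not as a double sum but as a single divisor-sum relation in the variable $u$, and then invert it by Möbius. First I would package the left-hand side cleanly. Setting
\[
    A(v,c) := \frac{1}{\sqrt c}\, S_{\alpha v,\beta}\lrp{mv^2,n,c},
\]
the substitution $v\mapsto v/u$, $c\mapsto c/u$ gives $\sqrt{u/c}\,S_{\alpha v/u,\beta}(mv^2/u^2,n,c/u)=A(v/u,c/u)$, so each summand on the left of \eqref{eq:exp-sum-general} is exactly $\pfrac{m_L}{u}\,A(v/u,c/u)$. Writing $B(v,c)$ for the entire right-hand side of \eqref{eq:exp-sum-general}, Theorem~\ref{thm:exp-sum-general} then reads
\[
    B(v,c)=\sum_{u\mid(v,c)}\pfrac{m_L}{u}\,A(v/u,c/u)\qquad\text{for all }v\in\Z,\ c\geq 1.
\]

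The key structural fact I would exploit is that the weight $h(u):=\pfrac{m_L}{u}$ is \emph{completely} multiplicative in $u$, being a Kronecker symbol in its lower argument; this is precisely what makes the inversion collapse despite the shift in both arguments. I claim
\[
    A(v,c)=\sum_{u\mid(v,c)}\mu(u)\,h(u)\,B(v/u,c/u).
\]
To verify this I would substitute the displayed relation for $B(v/u,c/u)$ (legitimate since the theorem holds for every $v\in\Z$ and $c\geq 1$), interchange the two divisor sums, and regroup by $t=uw$, where $w$ is the inner divisor. Complete multiplicativity gives $h(u)h(w)=h(t)$, so the expression collapses to $\sum_{t\mid(v,c)}h(t)\,A(v/t,c/t)\sum_{u\mid t}\mu(u)$; since $\sum_{u\mid t}\mu(u)$ vanishes unless $t=1$, only the term $A(v,c)$ survives.

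Finally I would translate back to the Kloosterman sums. Multiplying through by $\sqrt c$ recovers $S_{\alpha v,\beta}(mv^2,n,c)$ on the left and the prefactor $\tfrac{i^{-\sigma}\sqrt c}{\sqrt{2N}}$ on the right, while expanding $B(v/u,c/u)$ produces the exponential $e_{\Delta c/u}(\ell v/u)$. The one point that requires care is the identity $e_{\Delta c/u}(\ell v/u)=e_{\Delta c}(\ell v)$, which renders the exponential independent of $u$ and matches the stated formula verbatim. Since there is no analytic content here—only the combinatorics of the divisor lattice and the multiplicativity of the Kronecker symbol—I expect no genuine obstacle; the only places to be careful are confirming that $h$ is completely multiplicative (so the inner Möbius sum telescopes) and tracking the collapse of the exponential, together with noting that the consistency conditions on $m,n$ persist under $v\mapsto v/u$, so that each $A(v/u,c/u)$ is a well-defined Kloosterman sum.
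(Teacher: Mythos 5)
Your proposal is correct and follows essentially the same route as the paper: the paper also treats \eqref{eq:exp-sum-general} as a divisor-sum relation $g(v,c)=\sum_{u\mid(v,c)}\pfrac{m_L}{u}f(v/u,c/u)$ and inverts it by interchanging the two divisor sums, using the complete multiplicativity of $\pfrac{m_L}{\cdot}$ so that $\sum_{u\mid t}\mu(u)$ collapses everything to the $t=1$ term. Your observation that $e_{\Delta c/u}(\ell v/u)=e_{\Delta c}(\ell v)$ is exactly the final bookkeeping step implicit in the paper's ``the corollary follows immediately.''
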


\begin{proof}
    We apply M\"obius inversion in two variables.
    The identity \eqref{eq:exp-sum-general} can be written
    \[
        \sum_{u\mid (v,c)} \pfrac{m_L}{u} f(v/u,c/u) = g(v,c).
    \]
    Therefore
    \begin{align}
        \sum_{u\mid (v,c)} \mu(u) \pfrac{m_L}{u} g(v/u,c/u) 
        &= \sum_{u\mid (v,c)} \mu(u) \pfrac{m_L}{u} \sum_{w\mid (v/u,c/u)} \pfrac{m_L}{w} f(v/uw,c/uw) \\
        &= \sum_{\substack{v=uwa \\ c=uwb}} \mu(u) \pfrac{m_L}{uw} f(a,b) \\
        &= \sum_{\substack{a\mid v, \, b\mid c \\ v/a=c/b}} \pfrac{m_L}{v/a} f(a,b) \sum_{u\mid v/a} \mu(u) = f(v,c).
    \end{align}
    The corollary follows immediately.
\end{proof}

We can now prove Theorem~\ref{thm:main-weil-bound}, assuming the truth of Theorems~\ref{thm:exp-sum-identity-g=1} and \ref{thm:exp-sum-general}.

\begin{proof}[Proof of Theorem~\ref{thm:main-weil-bound}]
    Suppose that $\alpha,\beta\in L'/L$ and that $\frac{m}{2\Delta}-q(\alpha),\frac{n}{2\Delta}-q(\beta)\in \Z$. Write $m=m_0v^2$ with $(-1)^{(g-1)/2}m_0$ fundamental and $(v,\Delta)=1$.
    We assume here that $g>1$; the case $g=1$ is similar and a bit easier.
    Since $v$ and $|L'/L|$ are coprime, there exists an $\alpha'\in L'/L$ such that $\alpha=v\alpha'$.
    Since $m_0v^2-\vprod{\Delta v\alpha',v\alpha'}\in 4N\Z$, we have $\frac{m_0}{2\Delta}-q(\alpha')\in \Z$.
    By Corollary~\ref{cor:mobius-inv} we have
    \begin{equation}
        S_{\alpha' v,\beta}\left(m_0v^2,n, c\right) \ll_L \sqrt c  \sum_{u\mid (v,c)} R(m_0n,c/u),
    \end{equation}
    where $R(y,c)$ is the number of solutions to the quadratic congruence $x^2\equiv y\pmod{c}$.
    Since $R(y,c)$ is multiplicative as a function of $c$, it suffices to evaluate $R(y,p^\lambda)$ for each prime power $p^\lambda\parallel c$.

    Suppose that $p$ is odd.
    (In the case $p=2$ the estimates given below are correct if we multiply each upper bound by $2$.)
    If $p\nmid y$ then $R(y,p^\lambda)\leq 2$ by a simple argument using Hensel's lemma.
    Now suppose that $y=p^\mu y'$ with $p\nmid y'$ and $\mu\geq 1$.
    Then any solution to $x^2\equiv y\pmod{p^\lambda}$ can be written $x=p^\delta x'$, where $\delta = \min(\lceil \frac\mu 2 \rceil, \lceil \frac\lambda 2 \rceil)$.
    Then
    \begin{equation}
        R(y,p^\lambda) \leq 
        \begin{cases}
            2p^{\mu-\delta} & \text{ if } \mu<\lambda, \\
            p^{\lambda-\delta} & \text{ if } \mu\geq \lambda.
        \end{cases}
    \end{equation}
    It follows that $R(y,p^\lambda) \leq 2p^{\min(\mu,\lambda)/2}$, so
    \[
        R(y,c) \leq 2^{\omega(c)+1} (y,c)^{\frac 12}.
    \]
    Theorem~\ref{thm:main-weil-bound} follows.
\end{proof}

\section{Background} \label{sec:background}

\subsection{Lattices and discriminant groups}

Let $L$ be an even lattice with nondegenerate symmetric bilinear form $\langle \cdot, \cdot \rangle$, and let $q(x) = \frac 12 \vprod {x,x}$ denote the associated $\Z$-valued quadratic form.
Let $L'$ denote the dual lattice
\begin{equation} \label{eq:L'-def}
    L' = \left\{ x\in L\otimes \Q : \vprod {x,y} \in \Z \text{ for all }y\in L \right\};
\end{equation}
then the quotient $L'/L$ is a finite abelian group.
We denote the standard basis of $\C[L'/L]$ by $\{\mathfrak e_\alpha : \alpha \in L'/L\}$.

By identifying $L$ with $\Z^g$ we may write $\vprod{x,y} = x^TMy$ for all $x,y\in \Z^g$, for some symmetric integer matrix $M$ with even diagonal.
Let $\Delta = \det M$; then $|L'/L|=|\Delta|$.
If $\alpha\in L'$ then we can write $\alpha = M^{-1}a$ for some $a\in \Z^g$ and we have $\Delta \alpha\in L$.

Here we give a few lemmas that will be useful in the following section.

\begin{lemma} \label{lem:alpha-beta-gamma}
    For all $\alpha,\beta,\gamma\in L'$ we have
    \[\Delta(\vprod{\gamma,\alpha}\beta-\vprod{\gamma,\beta}{\alpha})\in L.\]
\end{lemma}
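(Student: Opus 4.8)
The plan is to pass to coordinates, identifying $L$ with $\Z^g$ so that $\vprod{x,y}=x^TMy$ for a symmetric integer matrix $M$ with $\Delta=\det M$, and writing $A:=\Delta M^{-1}=\operatorname{adj}(M)$ for the (integral) adjugate, so that $MA=AM=\Delta I$. Under this identification $L'=M^{-1}\Z^g$, and a one-line computation shows that the dual of $L'$ with respect to $\vprod{\cdot,\cdot}$ is again $L$: for $x=M^{-1}a\in L'$ one has $\vprod{y,x}=y^Ta$, so $\vprod{y,x}\in\Z$ for all $x\in L'$ exactly when $y\in\Z^g=L$. Hence, since $w:=\Delta(\vprod{\gamma,\alpha}\beta-\vprod{\gamma,\beta}\alpha)$ clearly lies in $L\otimes\Q$, to prove $w\in L$ it suffices to check that $\vprod{w,\delta}\in\Z$ for every $\delta\in L'$.

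Next I would compute this pairing explicitly. Writing $\alpha=M^{-1}a$, $\beta=M^{-1}b$, $\gamma=M^{-1}c$, $\delta=M^{-1}d$ with $a,b,c,d\in\Z^g$, one has $\vprod{\gamma,\alpha}=\tfrac1\Delta c^TAa$ and likewise for the other pairings, so that
\[
    \vprod{w,\delta}=\tfrac1\Delta\,c^{T}A\,(ab^{T}-ba^{T})\,A\,d .
\]
Thus the lemma reduces to the purely matrix-theoretic divisibility statement that $\tfrac1\Delta APA$ is an integer matrix, where $P:=ab^{T}-ba^{T}$ is antisymmetric. Since $P$ is antisymmetric, it is enough to prove the (more general) claim that $\Delta\mid APA$ entrywise for \emph{every} antisymmetric integer matrix $P$.

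Finally I would establish $\Delta\mid APA$ one prime at a time, since an integer is divisible by $\Delta$ iff it is divisible by $p^{v_p(\Delta)}$ in $\Z_p$ for every $p$. Over $\Z_p$ the symmetric form $M$ is congruent to a block-diagonal matrix $D=\bigoplus_s M_s$ with blocks of size $1$ and $2$ (diagonal blocks for $p$ odd; the classical $2$-adic decomposition of even forms when $p=2$), say $M=W^TDW$ with $W\in\operatorname{GL}_g(\Z_p)$. The relation $A=\det(W)^2\,W^{-1}(\operatorname{adj}D)(W^{-1})^T$ shows that the unit $\det(W)^4$ and the integral conjugators $W^{-1},(W^{-1})^T$ do not affect $p$-divisibility, so it suffices to prove $(\operatorname{adj}D)\,P'\,(\operatorname{adj}D)\equiv0\pmod{p^{v_p(\det D)}}$ for $P'=(W^{-1})^TPW^{-1}$, which is again antisymmetric. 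Here $\operatorname{adj}D$ is block-diagonal with $s$-th block $\big(\prod_{r\ne s}\det M_r\big)\operatorname{adj}(M_s)$, so its $(s,t)$ block of the triple product is $\big(\prod_{r\ne s}\det M_r\big)\big(\prod_{r\ne t}\det M_r\big)\operatorname{adj}(M_s)\,P'_{st}\,\operatorname{adj}(M_t)$. For $s\ne t$ the two products already supply $\det D$ times a $p$-integer. For $s=t$ the diagonal block $P'_{ss}$ is itself antisymmetric, which forces $\operatorname{adj}(M_s)P'_{ss}\operatorname{adj}(M_s)$ to be $\det(M_s)$ times an integer matrix: trivially when $M_s$ is $1\times1$ (then $P'_{ss}=0$), and by the identity $\operatorname{adj}(M_s)\,\ptmatrix{0}{r}{-r}{0}\,\operatorname{adj}(M_s)=r\det(M_s)\,\ptmatrix{0}{1}{-1}{0}$ when $M_s$ is $2\times2$. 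In either case the block is divisible by $\det D$, hence by $p^{v_p(\Delta)}$.

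I expect the divisibility $\Delta\mid APA$ to be the whole difficulty: the naive estimate, using only that $\vprod{\cdot,\cdot}$ maps $L'\times L'$ into $\tfrac1\Delta\Z$, yields merely $\vprod{w,\delta}\in\tfrac1\Delta\Z$, so one genuinely needs the antisymmetry of $P$ together with the block structure of $M$ to recover the missing factor of $\Delta$. The prime $p=2$ is the most delicate point, since the form need not diagonalize; the saving grace is that the $2\times2$ adjugate identity above disposes of the $2$-dimensional blocks uniformly, so no separate dyadic case analysis is required.
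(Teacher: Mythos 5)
Your proof is correct, but it takes a genuinely different route from the paper. The paper argues by multilinearity: it reduces to $\alpha,\beta,\gamma$ coming from standard basis vectors, writes the components of $\Delta(\vprod{\gamma,\alpha}\beta-\vprod{\gamma,\beta}\alpha)$ as expressions $M_{j,\ell}M_{i,k}-M_{i,\ell}M_{j,k}$ in the minors of $M$, and then invokes the Desnanot--Jacobi (Sylvester) determinant identity, which says exactly that such $2\times 2$ minors of the adjugate are divisible by $\det M$. You instead use the duality $(L')'=L$ to reduce the lemma to the matrix divisibility $\Delta \mid \operatorname{adj}(M)\,P\,\operatorname{adj}(M)$ for antisymmetric integer $P$, and you prove that prime by prime via the Jordan splitting of symmetric bilinear forms over $\Z_p$ into blocks of size $1$ and $2$, finishing with the identity $B^{T}\ptmatrix{0}{1}{-1}{0}B=\det(B)\ptmatrix{0}{1}{-1}{0}$ for $2\times2$ blocks. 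The two are secretly the same fact: your divisibility claim is precisely the divisibility consequence of Jacobi's identity for $2\times2$ minors of the adjugate, so you have in effect reproved that identity for symmetric matrices by $p$-adic methods. What the paper's route buys is brevity and generality (the determinant identity needs no symmetry of $M$ and no classification theorem); what your route buys is that it avoids citing the determinant identity, makes transparent where antisymmetry of $P$ enters, and stays within the lattice-theoretic toolkit ($\Z_p$-Jordan decompositions) that is standard in this subject. One small imprecision: the $2$-adic splitting into blocks of size at most $2$ holds for all symmetric $\Z_2$-integral forms, not just even ones (odd $1\times1$ blocks can occur); this does not affect your argument, since you treat $1\times1$ and $2\times2$ blocks uniformly and never use unimodularity or evenness of the blocks.
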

\begin{proof}
    We write $\alpha=M^{-1}a,\beta=M^{-1}b,\gamma=M^{-1}c$ for some $a,b,c\in \Z^g$.
    In this notation, it suffices to prove that the vector
    \[x = \det(M)(M^{-1}bc^TM^{-1}a-M^{-1}ac^TM^{-1}b)\]
    is in $\Z^g$.
    Notice that this quantity is linear in $a,b,$ and $c$, so we may assume that $a=e_i$, $b=e_j$, and $c=e_k$, where $e_i$ is the $i$-th standard basis vector. 
    If $x_\ell$ denotes the $\ell$-th component of $x$, then
    \[\frac{x_\ell}{\det (M)} = e_\ell^T M^{-1}e_je_k^TM^{-1}e_i-e_\ell^TM^{-1}e_ie_k^TM^{-1}e_j.\] 
    Note that $\det(M)e_\ell^TM^{-1}e_j=(-1)^{j+\ell}M_{j,\ell}$, where $M_{j,\ell}$ denotes the $j,\ell$-th minor of $M$, and similarly for the other products, so we obtain 
    \[\det(M)x_\ell = (-1)^{i+j+k+\ell} \left(M_{j,\ell}M_{i,k}-M_{i,\ell}M_{j,k}\right).\] 
    Sylvester's determinant identity \cite{bareiss}, also known as the Desnanot-Jacobi identity, shows that the expression on the right-hand side is divisible by $\det M$.
    It follows that $x_\ell \in \Z$.
\end{proof}

If $g$ is odd then $\Delta$ is even by Lemma~14.3.21 of \cite{cohen-stromberg}.
As in the introduction, we write $|\Delta| = 2N$ with $N\in \Z^+$.

\begin{lemma} \label{lem:l^2-mn}
    Suppose that $g$ is odd.
    Let $\alpha,\beta\in L'/L$.
    Let $\ell,m,n\in \Z$ such that
    \[
        \tfrac{\ell}{\Delta}-\vprod{\alpha,\beta}\in \Z, \quad \tfrac{m}{2\Delta}-q(\alpha)\in \Z, \ \text{ and } \ \tfrac{n}{2\Delta}-q(\beta) \in \Z.
    \]
    Then $\ell^2\equiv mn \pmod{2N}$.
    If $g=1$ then $\ell^2\equiv mn\pmod{4N}$.
\end{lemma}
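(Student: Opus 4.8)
The plan is to translate the three hypotheses into congruences for $\ell$, $m$, $n$ and then to recognize that $\ell^2-mn$ is governed, modulo $\Delta$, by the Gram determinant of $\alpha$ and $\beta$, which Lemma~\ref{lem:alpha-beta-gamma} forces into $\tfrac1\Delta\Z$.

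First I would fix the integral structure: identify $L$ with $\Z^g$, write $\vprod{x,y}=x^TMy$, and (since $\alpha,\beta\in L'$) set $\alpha=M^{-1}a$, $\beta=M^{-1}b$ with $a,b\in\Z^g$. Let $A=\Delta M^{-1}$ be the adjugate of $M$; it is an integer matrix, and symmetric because $M$ is. Put $P=a^TAb$, $Q_\alpha=a^TAa$, $Q_\beta=b^TAb$. These are integers, and a short computation gives $P=\Delta\vprod{\alpha,\beta}$, $Q_\alpha=\Delta\vprod{\alpha,\alpha}=2\Delta q(\alpha)$, and $Q_\beta=2\Delta q(\beta)$. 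The three hypotheses therefore become $\ell\equiv P\pmod\Delta$, $m\equiv Q_\alpha\pmod{2\Delta}$, and $n\equiv Q_\beta\pmod{2\Delta}$. Because $g$ is odd, $\Delta$ is even, so $\Delta^2\equiv0\pmod{2\Delta}$; writing $\ell=P+\Delta t$ and expanding then gives $\ell^2\equiv P^2\pmod{2\Delta}$, while $mn\equiv Q_\alpha Q_\beta\pmod{2\Delta}$ is immediate. Everything thus reduces to understanding $P^2-Q_\alpha Q_\beta$.

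The key step is the congruence $P^2-Q_\alpha Q_\beta\equiv0\pmod\Delta$. I would apply Lemma~\ref{lem:alpha-beta-gamma} with $\gamma=\alpha$, so that $w:=\Delta(\vprod{\alpha,\alpha}\beta-\vprod{\alpha,\beta}\alpha)\in L$. Pairing $w$ against $\beta\in L'$ and invoking the defining property of the dual lattice gives $\vprod{\beta,w}\in\Z$, that is,
\[
\Delta\bigl(\vprod{\alpha,\alpha}\vprod{\beta,\beta}-\vprod{\alpha,\beta}^2\bigr)\in\Z.
\]
Multiplying through by $\Delta$ and substituting $Q_\alpha=\Delta\vprod{\alpha,\alpha}$, $Q_\beta=\Delta\vprod{\beta,\beta}$, $P=\Delta\vprod{\alpha,\beta}$ converts this into $Q_\alpha Q_\beta-P^2\in\Delta\Z$. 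Together with the previous paragraph this yields $\ell^2-mn\equiv P^2-Q_\alpha Q_\beta\equiv0\pmod\Delta$, and since $|\Delta|=2N$ this is exactly $\ell^2\equiv mn\pmod{2N}$.

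For the sharper claim when $g=1$, I would note that the adjugate of the $1\times1$ matrix $M$ is $A=1$, so $P=ab$, $Q_\alpha=a^2$, $Q_\beta=b^2$, and hence $P^2-Q_\alpha Q_\beta=0$ identically. The middle term in the decomposition of $\ell^2-mn$ now vanishes modulo $2\Delta$ rather than only modulo $\Delta$, so the same computation upgrades to $\ell^2\equiv mn\pmod{4N}$. I expect the one genuinely nonmechanical point to be recognizing that Lemma~\ref{lem:alpha-beta-gamma} paired against $\beta$ is precisely the assertion that the Gram determinant $\vprod{\alpha,\alpha}\vprod{\beta,\beta}-\vprod{\alpha,\beta}^2$ lies in $\tfrac1\Delta\Z$; the remaining difficulty is the factor-of-two bookkeeping, which is exactly where the evenness of $\Delta$ (hence the hypothesis that $g$ is odd) enters.
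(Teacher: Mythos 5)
Your proof is correct and takes essentially the same route as the paper's: both convert the hypotheses into the congruences $\ell\equiv\Delta\vprod{\alpha,\beta}\pmod{2N}$, $m\equiv\Delta\vprod{\alpha,\alpha}\pmod{4N}$, $n\equiv\Delta\vprod{\beta,\beta}\pmod{4N}$, reduce $\ell^2-mn$ modulo $4N$ to $\Delta^2\bigl(\vprod{\alpha,\beta}^2-\vprod{\alpha,\alpha}\vprod{\beta,\beta}\bigr)$, and then apply Lemma~\ref{lem:alpha-beta-gamma} with $\gamma=\alpha$ paired against $\beta$ to conclude that $\Delta\bigl(\vprod{\alpha,\beta}^2-\vprod{\alpha,\alpha}\vprod{\beta,\beta}\bigr)\in\Z$, the Gram determinant vanishing identically when $g=1$. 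Your adjugate-matrix bookkeeping with $P$, $Q_\alpha$, $Q_\beta$ is only a notational variant of the paper's argument.
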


\begin{proof}
    The assumptions on $\ell$, $m$, and $n$ are equivalent to
    \begin{align}
        \ell &\equiv \Delta\vprod{\alpha,\beta} \pmod{2N}, \\
        m &\equiv \Delta\vprod{\alpha,\alpha} \pmod{4N}, \\
        n &\equiv \Delta\vprod{\beta,\beta} \pmod{4N}.
    \end{align}
    It follows that
    \[
        \ell^2-mn \equiv \Delta^2(\vprod{\alpha,\beta}^2-\vprod{\alpha,\alpha}\vprod{\beta,\beta}) \pmod{4N}.
    \]
    If $g=1$ then $\vprod{\alpha,\beta}^2=\vprod{\alpha,\alpha}\vprod{\beta,\beta}$ so $\ell^2\equiv mn\pmod{4N}$.
    If $g>1$ then the lemma will follow if we can show that $\Delta(\vprod{\alpha,\beta}^2-\vprod{\alpha,\alpha}\vprod{\beta,\beta})$ is an integer.
    By Lemma~\ref{lem:alpha-beta-gamma} we have
    \[
        x := \Delta (\vprod{\alpha,\beta}\alpha - \vprod{\alpha,\alpha}\beta) \in L.
    \]
    Thus $\vprod{x,\beta}\in \Z$, which completes the proof.
\end{proof}

\begin{lemma}
    Suppose that $g$ is odd.
    Let $\alpha \in L'/L$ and suppose that $\frac{m}{2\Delta}-q(\alpha)\in \Z$.
    Then 
    \[
        (-1)^{(g-1)/2} m\equiv 0,1\pmod{4}.
    \]
\end{lemma}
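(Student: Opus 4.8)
We need to show that if $g$ is odd and $\frac{m}{2\Delta}-q(\alpha)\in\Z$, then $(-1)^{(g-1)/2}m\equiv 0,1\pmod 4$. The plan is to reduce this to a statement about the discriminant $\Delta=\det M$ together with the sign $(-1)^{(g-1)/2}$, using the congruence $m\equiv \Delta\vprod{\alpha,\alpha}\pmod{4N}$ that was already derived in the proof of Lemma~\ref{lem:l^2-mn}. Since $q(\alpha)=\tfrac12\vprod{\alpha,\alpha}$, the hypothesis $\frac{m}{2\Delta}-q(\alpha)\in\Z$ says exactly that $m\equiv\Delta\vprod{\alpha,\alpha}\pmod{4N}$, and in particular $m\equiv\Delta\vprod{\alpha,\alpha}\pmod 4$. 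So it suffices to understand $m\bmod 4$, which is controlled by $\Delta\bmod 4$ and by the integer $\Delta\vprod{\alpha,\alpha}$.

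The key number-theoretic input is the classical fact about the sign of the discriminant of an odd-rank even lattice: for $g$ odd, the quantity $(-1)^{(g-1)/2}\Delta$ is congruent to $0$ or $1\bmod 4$, i.e.\ it is (up to squares) a discriminant of the expected shape. This is essentially a consequence of the Gauss-sum / oddity formula for quadratic forms, or equivalently Milgram's formula, which ties the signature of the form mod $8$ to the structure of the discriminant group; the relevant reference is the same Lemma~14.3.21 of \cite{cohen-stromberg} already invoked to conclude $\Delta$ is even. First I would extract from that source not just the parity of $\Delta$ but the precise statement that $(-1)^{(g-1)/2}\Delta\equiv 0,1\pmod 4$. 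Granting this, I would write $m\equiv\Delta\vprod{\alpha,\alpha}\pmod 4$ and multiply through by $(-1)^{(g-1)/2}$ to get
\[
    (-1)^{(g-1)/2}m\equiv \bigl((-1)^{(g-1)/2}\Delta\bigr)\vprod{\alpha,\alpha}\pmod 4.
\]
Because $\vprod{\alpha,\alpha}=2q(\alpha)$ lives in $\tfrac1N\Z$ rather than $\Z$, the cleaner route is to track the integer $\Delta\vprod{\alpha,\alpha}$ directly: write $\alpha=M^{-1}a$ so that $\Delta\vprod{\alpha,\alpha}=a^T(\Delta M^{-1})a=a^T(\operatorname{adj}M)a$, an honest integer, and analyze its residue mod $4$ against the residue of $\Delta$.

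The main obstacle I anticipate is pinning down the mod~$4$ behavior of the quadratic expression $a^T(\operatorname{adj}M)a$ relative to $(-1)^{(g-1)/2}$, rather than the trivial reduction above. The cleanest formulation is that $(-1)^{(g-1)/2}m$ agrees mod $4$ with $(-1)^{(g-1)/2}\Delta\cdot\vprod{\alpha,\alpha}$, and that both factors are separately understood: the discriminant sign is fixed by the lattice-theoretic input, while $\vprod{\alpha,\alpha}\pmod 2$ or the integer $a^T(\operatorname{adj}M)a\pmod 4$ is even or odd in a way that always pushes the product into the residue class $\{0,1\}$. I expect that the fundamental-discriminant condition $m\equiv 0,1\pmod 4$ is forced because a square times a discriminant is again $\equiv 0,1\pmod 4$: since $m$ differs from $\Delta\vprod{\alpha,\alpha}$ by a multiple of $4$, and $\Delta\vprod{\alpha,\alpha}=(\Delta\alpha)^T\alpha$ can be recognized as $\Delta$ times a value of the dual form, the parity of $\Delta\vprod{\alpha,\alpha}$ matches that of $\Delta$ itself up to an even correction. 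I would verify this by splitting into the cases $\Delta\equiv 2\pmod 4$ and $\Delta\equiv 0\pmod 4$ (recall $\Delta$ is even here), checking in each case that multiplication by the appropriate sign $(-1)^{(g-1)/2}$, as dictated by the oddity formula, lands $m$ in $\{0,1\}\bmod 4$. The delicate point is ensuring the sign from \cite{cohen-stromberg} is exactly the one that makes $(-1)^{(g-1)/2}\Delta$ a genuine discriminant; once that is confirmed, the conclusion follows by a short case check.
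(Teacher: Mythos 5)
Your opening reduction is fine: the hypothesis is exactly $m\equiv\Delta\vprod{\alpha,\alpha}\pmod{4N}$, and $\Delta\vprod{\alpha,\alpha}=a^T(\operatorname{adj}M)\,a$ is an integer, so it suffices to control that integer mod $4$. But the ``key number-theoretic input'' you build on is false: for odd $g$ there is no statement $(-1)^{(g-1)/2}\Delta\equiv 0,1\pmod 4$. Take $L=\Z$ with $\vprod{x,y}=2xy$, so $g=1$ and $\Delta=2$; then $(-1)^{(g-1)/2}\Delta=2\equiv 2\pmod 4$. Lemma~14.3.21 of \cite{cohen-stromberg} gives only the parity of $\Delta$ (that is all the paper extracts from it), and no mod-$4$ refinement of $\Delta$ alone can hold, since $M=(2),(4),(6)$ already realize $\Delta\equiv 2,0,2\pmod 4$. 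Your fallback --- case-splitting on $\Delta\equiv 0,2\pmod 4$ and arguing that the sign ``as dictated by the oddity formula'' pushes $m$ into $\{0,1\}$ --- cannot be carried out either, because $\Delta\bmod 4$ together with the sign does not determine the values of the adjugate form mod $4$: for $M=\left(\begin{smallmatrix}2&1&0\\1&2&0\\0&0&2\end{smallmatrix}\right)$ one has $\Delta=6\equiv 2\pmod 4$ and the form $a^T(\operatorname{adj}M)\,a=4a_1^2-4a_1a_2+4a_2^2+3a_3^2$ takes values $\equiv 0$ and $\equiv 3\pmod 4$, while for $M=(2)$ one also has $\Delta\equiv 2\pmod 4$ but the values are $a^2\equiv 0,1\pmod 4$ and the sign $(-1)^{(g-1)/2}$ is opposite. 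What your argument actually needs is that $(-1)^{(g-1)/2}$ times \emph{every value} of the adjugate form is $\equiv 0,1\pmod 4$ --- but that is precisely the lemma being proved, restated, and your proposal supplies no independent proof of it. The heuristic ``a square times a discriminant'' works only in rank one, where $\Delta\vprod{\alpha,\alpha}=a^2$ really is a square; in higher rank $a^T(\operatorname{adj}M)\,a$ has no such shape.

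The paper's proof sidesteps all of this with a short construction you are missing: form the $(g+1)\times(g+1)$ block matrix $S=\ptmatrix{\frac m\Delta-a^TM^{-1}a}{-a^T}{-a}{-M}$. Its top-left entry equals $2\bigl(\tfrac{m}{2\Delta}-q(\alpha)\bigr)\in 2\Z$, so $S$ is an integer symmetric matrix with even diagonal, and a block-determinant computation gives $\det S=\tfrac m\Delta\det(-M)=-m$ (here $g$ odd is used). Since $S$ has \emph{even} rank $g+1$, the even-rank discriminant congruence (Lemma~14.3.20 of \cite{cohen-stromberg}: $(-1)^{k}\det\equiv 0,1\pmod 4$ for even symmetric matrices of rank $2k$) applies and yields $(-1)^{(g+1)/2}(-m)=(-1)^{(g-1)/2}m\equiv 0,1\pmod 4$. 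If you want to rescue your route, you need a mechanism of this kind (the block construction, or Milgram's formula applied to the discriminant form), not a congruence on $\Delta$ itself.
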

\begin{proof}
    Let $\alpha \in L'/L$ and write $\alpha = M^{-1}a$ for $a\in \Z^g$ so that $q(\alpha)=\frac 12 a^TM^{-1}a$.
    Then $-m=\frac m\Delta \det(-M)=\det(S)$, where $S$ is the block matrix 
    \begin{equation}
        S=\ptMatrix{\frac{m}{\Delta}-a^TM^{-1}a}{-a^T}{-a}{-M}.
    \end{equation}
    Note that $S$ is a symmetric integer matrix with even diagonal, so the result follows from Lemma~14.3.20 of \cite{cohen-stromberg}.
\end{proof}

\subsection{The Weil representation}

Good references for the background material in this subsection are \cite[Chapter~1]{bruinier} and \cite[Chapter~14]{cohen-stromberg}.
Let $\Mp_2(\R)$ be the metaplectic group, the elements of which are of the form $(\gamma,\phi)$, where $\gamma=\ptmatrix abcd\in \SL_2(\R)$ and $\phi:\mathbb H\to\C$ is a holomorphic function with $\phi^2(\tau)=c\tau+d$.
The group law on $\Mp_2(\R)$ is given by
\begin{equation}
	(\gamma_1,\phi_1(\tau))(\gamma_2,\phi_2(\tau)) = (\gamma_1\gamma_2, \phi_1(\gamma_2\tau)\phi_2(\tau)).
\end{equation}
Let $\Mp_2(\Z)$ denote the inverse image of $\SL_2(\Z)$ under the covering map $(\gamma,\phi)\mapsto \gamma$.
Then $\Mp_2(\Z)$ is generated by the elements
\begin{equation}
	T=\left(\ptmatrix 1101, 1\right) \quad \text{ and } \quad S = \left(\ptmatrix 0{-1}10,\sqrt\tau\right)
\end{equation}
and the center of $\Mp_2(\Z)$ is generated by
\begin{equation}
	Z = S^2 = (ST)^3 = \left(\ptmatrix {-1}00{-1},i\right).
\end{equation}
The Weil representation associated with the lattice $L$ is the unitary representation
\[
    \rho_L : \Mp_2(\Z) \to \C[L'/L]
\]
given by
\begin{align}
    \rho_L(T) \e_\alpha &= e(q(\alpha))\e_\alpha, \label{eq:T-transform} \\
    \rho_L(S) \e_\alpha &= \frac{i^{(b^--b^+)/2}}{\sqrt{|L'/L|}} \sum_{\beta \in L'/L} e(-\vprod{\alpha,\beta})\e_\beta.
\end{align}
Here $(b^+,b^-)$ is the signature of $L$.
For $\frak g\in \Mp_2(\Z)$ we define the coefficient $\rho_{\alpha\beta}(\frak g)$ of the representation $\rho_L$ by
\begin{equation}
    \rho_L(\frak g) \frak e_\beta = \sum_{\alpha\in L'/L} \rho_{\alpha\beta}(\frak g) \frak e_\alpha.
\end{equation}
Shintani \cite{shintani} gave the following formula for the coefficients $\rho_{\alpha\beta}(\frak g)$: if $\frak g = (\ptmatrix abcd, \sqrt{cz+d})$ and $c>0$, then
\begin{equation} \label{eq:rho-alpha-beta-formula}
    \rho_{\alpha\beta}(\frak g) = \frac{i^{\frac{b^--b^+}{2}}}{c^{(b^++b^-)/2}\sqrt{|L'/L|}} \sum_{r\in L/cL} e_c(aq(\alpha+r)-\vprod{\beta,\alpha+r}+dq(\beta)).
\end{equation}
Since $\rho_L$ factors through a double cover of the finite group $\SL_2(\Z/4N\Z)$, we have the upper bound $\rho_{\alpha\beta}(\mathfrak g)\ll_L 1$.

If $f:\mathbb H\to \C[L'/L]$ is a modular form for the Weil representation then $f$ satisfies the transformation law
\[
    f(\gamma z) = \phi^{2k}(z) \rho_L(\mathfrak g) f(z) \quad \text{ for all } \mathfrak g = (\gamma,\phi)\in \Mp_2(\Z).
\]
Setting $\mathfrak g=Z$, we find that such an $f$ satisfies $f = (-1)^{2k+b^--b^+} f$.
Thus $f=0$ unless $k$ satisfies the consistency condition
\begin{equation} \label{eq:sigma-consistency}
    \sigma := k + \tfrac 12(b^--b^+) \in \Z.
\end{equation}

Suppose that $k$ satisfies \eqref{eq:sigma-consistency}.
Then for $c \in \Z^+$, $\frac{m}{2\Delta}\in \Z+q(\alpha)$, and $\frac{n}{2\Delta}\in \Z+q(\beta)$, 
we define the generalized Kloosterman sum as
\begin{equation} \label{eq:kloo-weil-def-2}
    S_{\alpha,\beta}(m,n,c) = e^{-\pi ik/2} \sum_{d(c)^\times} \bar \rho_{\alpha\beta}(\tilde\gamma) e_{2\Delta c}\left(ma+nd\right).
\end{equation}
Here $\gamma=\ptmatrix abcd\in \SL_2(\Z)$ is any matrix with bottom row $(c\ d)$ and $\tilde\gamma = (\gamma,\sqrt{cz+d})$ is a lift of $\gamma$ to $\Mp_2(\Z)$.
By \eqref{eq:T-transform} we have $\rho_{\alpha\beta}(T^r \frak g T^s) = e(rq(\alpha)+sq(\beta))\rho_{\alpha\beta}(\mathfrak g)$, so the sum \eqref{eq:kloo-weil-def-2} is independent of the choice of representatives for $(\Z/c\Z)^\times$ and the choice of matrix $\gamma$.

\begin{remark*}
While the weight $k$ does not play a major role in the definition \eqref{eq:kloo-weil-def-2}, we refer to the sums as half-integral weight Kloosterman sums when $g=b^++b^-$ is odd because of condition \eqref{eq:sigma-consistency}.
\end{remark*}

We conclude this subsection by proving equation \eqref{eq:eta-weil-connection}.
Let $L$ denote the lattice $\Z$ with bilinear form $\vprod {x,y} = 12xy$ (use $\vprod{x,y}=2xy$ instead for $\nu_\theta$).
The dual lattice is $L'=\frac{1}{12}\Z$, so we can write $\alpha = \frac{h}{12}$ and $\beta=\frac{j}{12}$ for $h,j\in \Z/12\Z$.
If $F(z) = \sum_{h(12)} \pfrac{12}{h} \eta(z) \e_{\alpha}$ (where we emphasize that $\e_\alpha$ depends on $h$) then
\begin{equation}
    F(\gamma z) = \rho_L(\tilde\gamma) (cz+d)^{\frac 12} F(z) \qquad \text{ for all }\gamma = \ptmatrix abcd\in \SL_2(\Z),
\end{equation}
where $\tilde \gamma=(\gamma,\sqrt{cz+d})\in \Mp_2(\Z)$ (see \cite[Section~3.2]{BO}). 
Therefore
\begin{align*}
    \nu_\eta(\gamma) F(z) &= (cz+d)^{-\frac 12}F(\gamma z) \\ &= \sum_{h(12)} \pfrac{12}{h} \eta(z) \rho_L(\tilde \gamma)\e_\alpha = \sum_{j(12)} \sum_{h(12)} \pfrac{12}{h}\rho_{\beta\alpha}(\tilde\gamma) \eta(z) \e_\beta,
\end{align*}
from which it follows that 
\begin{equation}
    \nu_\eta(\gamma) = \pfrac{12}{h} \sum_{j(12)} \pfrac{12}{j} \rho_{\alpha\beta}(\tilde\gamma) \qquad \text{ for all }\gamma \in \SL_2(\Z),
\end{equation}
for any $h\in \Z/12\Z$ with $(h,6)=1$.
Thus, for all such $h$ we have
\begin{equation}
    S\left(\tfrac{m}{24},\tfrac{n}{24},c,\nu_\eta\right) = \pfrac{12}{h}\sum_{j(12)} \pfrac{12}{j} S_{\alpha,\beta}(m,n,c).
\end{equation}

\subsection{Gauss sums}

Let $G(c)$ denote the Gauss sum
\begin{equation}
    G(c) = \sum_{x(c)} e_c(x^2).
\end{equation}
The evaluation of these sums is a classical result; see Chapter~1 of \cite{BEW} for a thorough treatment.
For odd $c$ we have
\begin{equation} \label{eq:gauss-odd}
    G(c) = \ep_c \sqrt{c},
\end{equation}
where
\begin{equation}
    \ep_c = 
    \begin{cases}
        1 & \text{ if }c\equiv 1\pmod{4}, \\
        i & \text{ if }c\equiv 3\pmod{4}.
    \end{cases}
\end{equation}
Furthermore, if $(a,c)=1$ then
\begin{equation} \label{eq:Gauss-y}
    \sum_{x(c)} e_c(ax^2) = \pfrac{a}{c} G(c).
\end{equation}
When $c=2^\lambda$ we will encounter the more general Gauss sums
\begin{equation}
    G(a,b,c) = \sum_{x(c)} e_{c}(ax^2+bx).
\end{equation}
These are evaluated in Chapter~1 of \cite{BEW}.
We have $G(a,b,c) = 0$ unless $(a,c)\mid b$, and if $a$ is odd then
\begin{equation} \label{eq:gauss-even}
    \sum_{x(2^\lambda)} e_{2^\lambda}(ax^2+bx) = 
    \begin{cases}
        2 & \text{ if } \lambda = 1 \text{ and $b$ is odd}, \\
        e_{2^\lambda}\left(-\bar a (b/2)^2\right)(1+i)\ep_a^{-1} \pfrac{2}{a}^\lambda 2^{\lambda/2} & \text{ if }\lambda\geq 2 \text{ and $b$ is even}, \\
        0 & \text{ otherwise},
    \end{cases}
\end{equation}
where $\bar a a \equiv 1\pmod{2^\lambda}$.

More generally, suppose that $f$ is a quadratic form on $\Z^g$, given by $f(x) = \frac 12 x^T M x$, where $M$ is a symmetric $g\times g$ integer matrix with even  diagonal.
Let $\Delta = \det M$.
If $c$ is odd and $(\Delta,c)=1$ then
\begin{equation} \label{eq:Gauss-sum-quadratic-form}
    \sum_{x \in (\Z/c\Z)^g} e_c(f(x)) = \pfrac{\bar 2^g\Delta}{c} G(c)^{g},
\end{equation}
where $\bar 2 2\equiv 1\pmod{c}$.
This formula is proved by Weber in \cite[Section~6]{weber}, see also \cite{cohen}.
It can be proved by first reducing to the case where $c=p^\lambda$ is a prime power, then using the fact that $f$ can be diagonalized over $\Z_p$ when $p$ is odd.

Lastly, we will encounter the sum
\begin{equation}
    T(n,p^\lambda) = \sum_{d(p^\lambda)^\times} \pfrac{d}{p} e_{p^\lambda}(dn)
\end{equation}
for an odd prime $p$.
By replacing $d$ by $d+p$, we see that $T(n,p^\lambda) = 0$ unless $n\equiv 0\pmod{p^{\lambda-1}}$.
In that case, $T(n,p^\lambda) = p^{\lambda-1}T(n/p^{\lambda-1},p)$, and this latter sum is the Gauss sum attached to the character $(\frac{\cdot}{p})$, which is evaluated in \cite[Chapter~1]{BEW}.
We conclude that
\begin{equation} \label{eq:twisted-exp-sum}
    \sum_{d(p^\lambda)^\times} \pfrac{d}{p} e_{p^\lambda}(d n) = 
    \begin{dcases}
        \varepsilon_p p^{\lambda-\frac12} \pfrac{n/p^{\lambda-1}}{p} & \text{ if }n \equiv 0\pmod{p^{\lambda-1}}, \\
         0 & \text{ otherwise.}
    \end{dcases}
\end{equation}
By a similar method (instead replacing $d$ by $d+4$) we have
\begin{equation}\label{eq:twisted-exp-sum-2}
    \sum_{d(2^\lambda)^\times} \pfrac{-1}{d} e_{2^\lambda}(d n) = 
    \begin{dcases}
        2^{\lambda-1}i \pfrac{-4}{n/2^{\lambda-2}} & \text{ if }n \equiv 0\pmod{2^{\lambda-2}}, \\
         0 & \text{ otherwise.}
    \end{dcases}
\end{equation}

\section{Proof of Theorems~\ref{thm:exp-sum-identity-g=1} and \ref{thm:exp-sum-general} } \label{sec:proof-identity}

Fix $\alpha,\beta,m,n$ satisfying $\frac{m}{2\Delta}-q(\alpha)\in \Z$ and $\frac{n}{2\Delta}-q(\beta)\in \Z$.
Suppose that $(-1)^{(g-1)/2}m$ is a fundamental discriminant.
Let $h = \frac{g+1}{2}$ and for convenience set
\[
    \chi_m \left(Nc, \ell, \mfrac{\ell^2-mn}{4Nc}\right) = 0 \quad \text{ if }\ell^2\not\equiv mn\pmod{4Nc}.
\]
Let
\begin{equation}
    L_v(c) = i^\sigma \sqrt{2N}\sum_{u\mid (v,c)} \pfrac{m_L}{u} \sqrt{\frac uc} \, S_{\alpha \frac vu, \beta}\left( mv^2/u^2, n, \mfrac cu \right)
\end{equation}
and
\begin{equation}
    R_v(c) =
    \begin{dcases}
    \sum 
    \chi_{m}(Nc,\ell,\tfrac{\ell^2-mn}{4Nc}) e_{\Delta c}(\ell v) & \text{ if }g=1, \\
    \sum 
    \xi_{\alpha,\beta}(\ell,m,n,c) e_{\Delta c}(\ell v) & \text{ if }g>1,
    \end{dcases}
\end{equation}
where $\ell$ runs mod $2Nc$ with $\frac{\ell}{\Delta}-\vprod{\alpha,\beta}\in \Z$ in both sums.
Note that $R_v(c)$ is periodic in $v$ with period $2Nc$, and its Fourier transform equals
    \begin{equation} \label{R-fourier}
    \frac{1}{2Nc}\sum_{v(2Nc)} e_{\Delta c}(-v\ell) R_v(c) = 
    \begin{cases}
        \chi_m (Nc,\ell,\tfrac{\ell^2-mn}{4Nc})  & \text{ if } \tfrac{\ell}{\Delta} \equiv \vprod{\alpha,\beta} (1) \text{ and } g=1, \\
        \xi_{\alpha,\beta}(\ell,m,n,c) & \text{ if }\tfrac{\ell}{\Delta} \equiv \vprod{\alpha,\beta} (1) \text{ and } g>1,\\
        0 & \text{ otherwise}.
    \end{cases}
\end{equation}
We claim that $L_v(c)$ is also periodic in $v$ with period $2Nc$.
After inserting the definition of the Kloosterman sum \eqref{eq:kloo-weil-def} and the formula for the coefficients of the Weil representation \eqref{eq:rho-alpha-beta-formula} into the definition of $L_v(c)$, we obtain
\begin{multline} \label{eq:Lv-three-sums}
    L_v(c) = \sum_{u\mid (v,c)}  \pfrac{m_L}{u} (c/u)^{-h} \sum_{d(c/u)^\times} 
    \\ 
    \times \sum_{r\in L/ (c/u) L} e_{c/u}\left( \left(\mfrac{m(v/u)^2}{2\Delta}-q(\alpha v/u+r)\right)a +\vprod{\beta,\alpha v/u+r} + \left(\mfrac n{2\Delta} - q(\beta)\right)d \right),
\end{multline}
where $ad\equiv 1\pmod{c/u}$.
Since $4Nq(\alpha)\in \Z$ and $2N\alpha\in L$ we have
\begin{align}
    \vprod{\alpha v/u+r,2N(c/u)\alpha} &= (c/u)\left(4Nq(\alpha)(v/u)+2N\vprod{r,\alpha}\right) \equiv 0\pmod{c/u}, \\
    q(2N\alpha c/u) &= 4N^2q(\alpha)(c/u)^2 \equiv 0\pmod{c/u}, \\
    \vprod{\beta,2N\alpha(c/u)} &= \vprod{\beta,2N\alpha}(c/u) \equiv 0\pmod{c/u}.
\end{align}
Thus $L_v(c)$ is indeed periodic in $v$ with period $2Nc$.
So it suffices to prove that
\begin{equation} \label{eq:cal-l}
    \mathcal L_\ell(c) := \frac{1}{2Nc}\sum_{v(2Nc)} e_{\Delta c}(-v\ell) L_v(c)
\end{equation}
agrees with the right-hand side of \eqref{R-fourier} for all $\ell \in \Z$.

By \eqref{eq:cal-l} and \eqref{eq:Lv-three-sums}, the quantity $\mathcal L_\ell(c)$ comprises four sums
\begin{equation}
    \sum_{v(2Nc)} \sum_{u\mid (v,c)}  \sum_{d(c/u)^\times} \sum_{r\in L/ (c/u) L}
\end{equation}
which we reorder as
\[
    \sum_{u\mid c} \sum_{d(c/u)^\times} \sum_{\substack{v(2Nc) \\ u\mid v}} \sum_{r\in L/(c/u)L}.
\]
We replace $v$ by $uv$, then $u$ by $c/u$, and rearrange terms to obtain
\begin{multline}
    \mathcal L_\ell(c) = \frac{1}{2Nc} \sum_{u\mid c} \pfrac{m_L}{c/u} u^{-h}\sum_{d(u)^\times} e_u\left(\tilde n d\right)
    \\ 
    \times \sum_{v(2Nu)} \sum_{r\in L/ uL} e_{u}\left( a\tilde mv^2 - a\vprod{\alpha v,r} - aq(r) +\vprod{\beta,r} + (\vprod{\beta,\alpha} - \tfrac {\ell}{\Delta})v\right), 
\end{multline}
where $\tilde m = \tfrac{m}{2\Delta}-q(\alpha) \in \Z$ and $\tilde n = \tfrac{n}{2\Delta}-q(\beta)\in \Z$.
If we make the change of variable $v\mapsto v+u$ we see that the $v$-sum equals zero unless
\[
    \tfrac{\ell}{\Delta} \equiv \vprod{\alpha,\beta} \pmod{1}.
\]
For the remainder of this proof we make this assumption and we set $\tilde \ell = \tfrac{\ell}{\Delta} - \vprod{\alpha,\beta}$.
By Lemma~\ref{lem:l^2-mn} we have $\ell^2\equiv mn\pmod{2N}$.

Now the summands in the $v$-sum are invariant under $v\mapsto v+u$, so we can write
\begin{equation} \label{eq:L-ell-c-u-d-sum}
    \mathcal L_\ell(c) = c^{-1} \sum_{u\mid c}\pfrac{m_L}{c/u} u^{-h}\sum_{d(u)^\times} e_u\left(\tilde n d\right) \mathcal S(d,u),
\end{equation}
where
\begin{equation}
    \mathcal S(d,u) = \sum_{v(u)} \sum_{r\in L/ uL} e_{u}\left( a\tilde mv^2 - a\vprod{\alpha v,r} - aq(r) +\vprod{\beta,r} -\tilde \ell v\right).
\end{equation}
Since $(d,u)=1$ we can replace $v$ by $dv$ and $r$ by $dr$ to get
\begin{equation}
    \mathcal S(d,u) = \sum_{v(u)} \sum_{r\in L/ uL} e_{u}(df(v,r)),
\end{equation}
where
\begin{equation}
    f(v,r) = \tilde m v^2 - \vprod{\alpha,r}v - q(r) + \vprod{\beta,r}-\tilde \ell v.
\end{equation}

\begin{remark*}
    In the case $g=1$, the two-dimensional quadratic Gauss sum $\mathcal S(d,u)$ is analogous to the sum appearing in Proposition~2 of \cite{GKZ}.
\end{remark*}

\begin{lemma} \label{lem:p-div-ell}
    We have $\mathcal S(d,u)=0$ unless $(m,u)\mid \ell$.
\end{lemma}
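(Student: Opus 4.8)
The plan is to view $\mathcal S(d,u)$ as a single Gauss sum attached to a quadratic form in the $g+1$ variables $x=(v,r)$ and then to exploit translation invariance. Writing $\alpha=M^{-1}a$ and $\beta=M^{-1}b$ and identifying $L/uL$ with $(\Z/u\Z)^g$ via $\vprod{x,y}=x^TMy$, one checks directly that $f(v,r)=\tfrac12 x^TSx+\mathbf b^Tx$, where $\mathbf b$ has first entry $-\tilde\ell$ and remaining entries $b$, and
\[
    S=\ptMatrix{2\tilde m}{-a^T}{-a}{-M}.
\]
This $S$ is exactly the symmetric integer matrix with even diagonal that appears in Section~\ref{sec:background}, and a Schur-complement computation using $2\tilde m+\vprod{\alpha,\alpha}=m/\Delta$ gives $\det S=(-1)^g m=-m$. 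In particular $S$ has even diagonal, so $\tfrac12 y^TSy\in\Z$ for every integer vector $y$.

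The key device will be a translation argument. If $y\in\Z^{g+1}$ satisfies $Sy\equiv 0\pmod u$, then reindexing the sum by $x\mapsto x+y$ yields
\[
    \mathcal S(d,u)=e_u\!\left(d\big(\tfrac12 y^TSy+\mathbf b^Ty\big)\right)\mathcal S(d,u),
\]
because the cross term $(Sy)^Tx$ is divisible by $u$. Since $(d,u)=1$, this forces $\mathcal S(d,u)=0$ unless $\tfrac12 y^TSy+\mathbf b^Ty\equiv 0\pmod u$. It therefore suffices, whenever $(m,u)\nmid\ell$, to exhibit a single kernel vector $y$ violating this congruence.

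The natural candidate comes from the relation $Sy_0=(m,0,\dots,0)^T$, which holds for $y_0=(\Delta,-\Delta\alpha)^T$ (note $\Delta\alpha\in L$); one also finds $\mathbf b^Ty_0=-\ell$ after using $\ell=\Delta(\tilde\ell+\vprod{\alpha,\beta})$. I would then set $y=\tfrac{u}{(m,u)}y_0$, which is integral and satisfies $Sy=\tfrac{u}{(m,u)}(m,0,\dots,0)^T\equiv 0\pmod u$. The step where the divisibility emerges, and the point I expect to require the most care, is verifying that the quadratic contribution is negligible: from $y^TSy=\big(\tfrac{u}{(m,u)}\big)^2 m\Delta$, the extra integer factor $\tfrac{u}{(m,u)}$ together with $\Delta$ being even (as $g$ is odd) produces a factor of $u$, so $\tfrac12 y^TSy\equiv 0\pmod u$. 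The congruence then collapses to $\mathbf b^Ty=-\tfrac{u}{(m,u)}\ell\equiv 0\pmod u$, i.e.\ $(m,u)\mid\ell$. Hence $\mathcal S(d,u)=0$ whenever $(m,u)\nmid\ell$. I expect the only genuine obstacles to be bookkeeping: confirming the block-determinant evaluation $\det S=-m$, the identities $Sy_0=(m,0,\dots,0)^T$ and $\mathbf b^Ty_0=-\ell$, and that the evenness of $\Delta$ indeed annihilates the quadratic term so that the clean divisibility condition survives.
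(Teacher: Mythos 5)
Your proof is correct and is essentially the paper's own argument recast in matrix language: your translation vector $y=\tfrac{u}{(m,u)}(\Delta,-\Delta\alpha)$ is exactly the paper's shift $(v,r)\mapsto(v+w,r-\alpha w)$ with $w=\Delta u/(m,u)$, and your checks that the cross term $(Sy)^Tx$ and the quadratic term $\tfrac12 y^TSy$ vanish modulo $u$ (the latter using that $\Delta$ is even) reproduce the paper's computation that $f(v+w,r-\alpha w)\equiv f(v,r)-\ell u/(m,u)\pmod{u}$, so both arguments force $\mathcal S(d,u)=0$ unless $(m,u)\mid\ell$.
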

\begin{proof}
    Let $w=\Delta u/(m,u)$ and note that $\alpha w\in L$.
    Then, since $\Delta$ is even,
    \begin{align}
        f(v+w,r-\alpha w) &= f(v,r) + \tfrac 12 \Delta mu^2/(m,u)^2 + (mv-\ell)u/(m,u) \\
        &\equiv f(v,r) - \ell u/(m,u) \pmod{u}.
    \end{align}
    Thus $\mathcal S(d,u) = e(-d\ell/(m,u)) \mathcal S(d,u)$, i.e.~$\mathcal S(d,u) = 0$ unless $(m,u)\mid \ell$.
\end{proof}

Using the Ramanujan sum evaluation
\begin{equation}
    \sum_{d(u)^\times} e_u(yd) = \sum_{t\mid (u,y)} \mu(u/t) t
\end{equation}
we find that
\begin{equation}
    \sum_{d(u)^\times} e_u(\tilde n d) \mathcal S(d,u) = \sum_{v,r} \sum_{t\mid (u,y)}\mu(u/t) t = \sum_{t\mid u} \mu(u/t)t \sum_{\substack{v,r \\ t\mid y}} 1,
\end{equation}
where $y=f(v,r)+\tilde n$.
The inner sum is invariant under $v\mapsto v+t$ and $r\mapsto r+t$, so we get
\begin{equation}
    \sum_{d(u)^\times} e_u(\tilde n d) \mathcal S(d,u) = u^{g+1} \sum_{t\mid u} \mu(u/t) t^{-g} N(t),
\end{equation}
where
\begin{equation} \label{eq:Nt-def}
    N(t) = \# \left\{(v,r)\in \Z/t\Z \times L/tL : f(v,r)+\tilde n \equiv 0 \pmod{t}\right\}.
\end{equation}
By the Chinese remainder theorem, $N(t)$ is multiplicative, therefore $L_\ell(c)$ is multiplicative as a function of $c$.
For the remainder of this section, assume $c=p^\lambda$ where $p$ is prime and $p\mid \ell$ if $p\mid m$ (which we can assume by Lemma~\ref{lem:p-div-ell}). 
By the discussion above, we have two valid expressions for $\mathcal L_\ell(p^\lambda)$:
\begin{align}
    \mathcal L_\ell(p^\lambda) 
    &= p^{-\lambda} \sum_{j=0}^\lambda \pfrac{m_L}{p^{\lambda-j}} p^{-jh} \sum_{d(p^j)^\times} e_{p^j}(\tilde n d) \mathcal S(d,p^j) \label{eq:exp-1} \\
    &= p^{-\lambda} \sum_{j=0}^\lambda \pfrac{m_L}{p^{\lambda-j}} p^{-j(h-1)} \left(N(p^j) - p^g N(p^{j-1})\right). \label{eq:exp-2}
\end{align}

We proceed by cases as follows.
\begin{enumerate}
    \item We first assume that $(m,\Delta,p)=1$ and that $p$ is odd, and we evaluate the Gauss sums $\mathcal S(d,u)$ in each of the cases $p\nmid m$ and $p\mid m$.
    \item In the case $g=1$, we evaluate $\mathcal S(d,u)$ for the remaining ``bad'' primes.
    \item When $g>1$ we approach the problem by studying the counting function $N(p^j)$. 
    We first show that the quantity $N(p^j)-p^g N(p^{j-1})$ is frequently zero. We then estimate the size of $N(p^j)$.
\end{enumerate}

\subsection{The case $(m,\Delta,p)=1$ with $p$ odd} \label{sec:complete-square}
We would like to make a change of variables that eliminates the linear terms in $f(v,r)$ modulo $u$, where $u=p^j$.
For $w\in \Z$ and $s\in L$ we have
\begin{multline}
    f(v+w,r+s) 
    = \tilde m v^2 
    - \langle \alpha,r \rangle v 
    - q(r)  
    + \frac{mw^2-2\ell w}{2\Delta}+q(\beta) - q(\beta-\alpha w-s)
    \\ + v\left( \frac{mw-\ell}{\Delta} - \vprod{\alpha,\alpha w+s-\beta}\right) 
    - \langle \alpha w+ s-\beta,r \rangle.
\end{multline}
If we are to eliminate the terms on the second line, then a natural choice is $s=\beta-\alpha w$, but this is usually not an element of $L$.
Note that either $p\nmid m$ or $p\parallel m$ because $(-1)^{(g-1)/2}m$ is a fundamental discriminant.

\begin{lemma} \label{lem:w-def}
    Suppose that $p$ is odd and $(m,\Delta,p)=1$.
    Let $k\in \Z^+$ and let
    \begin{equation}
        w = 
        \begin{cases} \label{eq:w-def}
            \bar m \ell & \text{ if }p\nmid m, \\
            \overline{(m/p)}(\ell/p) & \text{ if }p\mid m,
        \end{cases}
    \end{equation}
    where $\bar m m\equiv 1\pmod{p^{k}}$ in the first case and $\overline{(m/p)}(m/p)\equiv 1\pmod{p^{k}}$ in the second case.
    Then $\beta-\alpha w\in L+p^{k} (L'/L)$.
\end{lemma}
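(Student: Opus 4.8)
The plan is to pass to the $p$-primary part of the discriminant group and then reduce to the rank-one computation. I read the conclusion as the statement that the image of $\beta-\alpha w$ in $L'/L$ lies in the subgroup $p^k(L'/L)$, noting that $L+p^k(L'/L)$ is precisely the preimage in $L'$ of that subgroup. Decomposing $L'/L=(L'/L)_p\oplus(L'/L)_{p'}$ into its $p$-part and prime-to-$p$ part, multiplication by $p^k$ is surjective on $(L'/L)_{p'}$, so it suffices to control the $p$-component: I must show that the image of $\beta-\alpha w$ in $(L'/L)_p$ lies in $p^k(L'/L)_p$. When $p\mid m$ this is immediate, since the hypothesis $(m,\Delta,p)=1$ then forces $p\nmid\Delta$, hence $p\nmid\abs{L'/L}$ and $(L'/L)_p=0$, so there is nothing to prove. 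I therefore assume $p\nmid m$, and likewise $p\mid\Delta$ (otherwise $(L'/L)_p=0$ again).

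The crucial point is that $(L'/L)_p$ is \emph{cyclic}. Since $p$ is odd I diagonalize the Gram matrix over $\Z_p$, writing $L\otimes\Z_p=\bigoplus_i\Z_p e_i$ with $\vprod{e_i,e_j}=c_i\delta_{ij}$ and $c_i=p^{t_i}u_i$ for units $u_i$; then $(L'/L)_p\cong\bigoplus_i\Z/p^{t_i}\Z$ and $v_p(\Delta)=\sum_i t_i=:T$. In these coordinates $q(\alpha)=\tfrac12\sum_i a_i^2/c_i$, and each term has $v_p\ge -t_i$, so $v_p(q(\alpha))\ge-\max_i t_i$. On the other hand $\tfrac{m}{2\Delta}-q(\alpha)\in\Z_p$ together with $p\nmid m$ pins down $v_p(q(\alpha))=-T$. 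Comparing gives $T\le\max_i t_i$, and since $T=\sum_i t_i$ this forces all $t_i$ but one to vanish. Hence $(L'/L)_p\cong\Z/p^T\Z$ is cyclic, generated by the image $\bar f$ of the single dual basis vector $f=e/c$ with $c=p^Tu$, $u\in\Z_p^\times$, for which $\vprod{f,f}=1/(p^Tu)$.

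It then remains to carry out the rank-one computation on this cyclic part. Writing $\bar\alpha=a\bar f$ and $\bar\beta=b\bar f$, the hypotheses $\tfrac{m}{2\Delta}-q(\alpha),\ \tfrac{n}{2\Delta}-q(\beta),\ \tfrac{\ell}{\Delta}-\vprod{\alpha,\beta}\in\Z$ become, modulo $p^T$, the congruences $m\equiv\eta a^2$, $n\equiv\eta b^2$, and $\ell\equiv\eta ab$ for a single unit $\eta\in\Z_p^\times$; explicitly $\eta=U/u$, where $\Delta=p^TU$ with $U\in\Z_p^\times$. Because $p\nmid m$ we get $p\nmid a$, so $a$ is invertible modulo $p^T$; then $\bar m\equiv\bar\eta\,\bar a^{2}$ and $w=\bar m\ell\equiv\bar a\,b\pmod{p^{\min(k,T)}}$, whence $wa\equiv b\pmod{p^{\min(k,T)}}$. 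Therefore $\bar\beta-w\bar\alpha=(b-wa)\bar f$ lies in $p^{\min(k,T)}\Z/p^T\Z=p^k(L'/L)_p$, which is exactly what was required.

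The step I expect to be the main obstacle is the cyclicity of $(L'/L)_p$; everything after it is the $g=1$ calculation in disguise, with the unit $\eta$ and the inversion $\bar m m\equiv1\pmod{p^k}$ combining cleanly. The content of that step is that $p\nmid m$ forces $q(\alpha)$ to attain the most negative admissible valuation $-v_p(\Delta)$, which is possible only when the $p$-part of the discriminant form consists of a single Jordan block.
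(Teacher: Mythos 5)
Your proof is correct, and it takes a genuinely different route from the paper's. The paper's argument is global and essentially one line once Lemma~\ref{lem:alpha-beta-gamma} is available: taking $\gamma=\alpha$ there gives
\begin{equation}
    m\beta-\ell\alpha=\tilde m(2\Delta\beta)-\tilde\ell(\Delta\alpha)+\Delta\lrp{\vprod{\alpha,\alpha}\beta-\vprod{\alpha,\beta}\alpha}\in L
\end{equation}
(with the refinement $m_1\beta-\ell_1\alpha\in L$ when $p\mid m$, using $\Delta L\cap pL=\Delta pL$ and Lemma~\ref{lem:p-div-ell}), after which $\beta-\alpha w=\bar m(m\beta-\ell\alpha)+(1-m\bar m)\beta\in L+p^kL'$ follows at once. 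You instead localize: you split off the Sylow $p$-subgroup, observe that the claim is vacuous unless $p\nmid m$ and $p\mid\Delta$, prove that in that case $(L'/L)_p$ must be cyclic, and finish with the rank-one computation. Your cyclicity step is the real content and it is sound: after an odd-$p$ diagonalization over the $p$-adic integers, the hypothesis $\tfrac m{2\Delta}-q(\alpha)\in\Z$ with $p\nmid m$ and $p\mid\Delta$ forces $v_p(q(\alpha))=-v_p(\Delta)$, while each diagonal term has valuation at least $-\max_i t_i$, so $\sum_i t_i\le\max_i t_i$ and only one Jordan block survives; the congruences $m\equiv\eta a^2$, $\ell\equiv\eta ab\pmod{p^{v_p(\Delta)}}$ together with $m\bar m\equiv1\pmod{p^k}$ then give $wa\equiv b\pmod{p^{\min(k,v_p(\Delta))}}$, which is exactly what is needed since $p^{\min(k,v_p(\Delta))}(L'/L)_p=p^k(L'/L)_p$. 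Like the paper, you use the standing assumption $\tfrac\ell\Delta-\vprod{\alpha,\beta}\in\Z$ of Section~\ref{sec:proof-identity}, which is legitimate because the lemma is only invoked under that assumption. The trade-off: the paper's proof is shorter, needs no $p$-adic structure theory, and treats $p\mid m$ on the same footing as $p\nmid m$; yours buys a structural explanation --- the existence of $\alpha$ representing a $p$-unit $m$ forces the $p$-part of the discriminant form to be a single cyclic block, the $p\mid m$ case is trivially true for \emph{every} $w$, and the general case honestly reduces to the $g=1$ calculation --- at the cost of invoking Jordan decomposition over the $p$-adics.
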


\begin{proof}
    We begin with the observation
    \[
    m\beta - \ell\alpha = \tilde m(2\Delta\beta) - \tilde \ell(\Delta\alpha) + \Delta(\vprod {\alpha,\alpha}\beta - \vprod{\alpha,\beta}\alpha) \in L
    \]
    by Lemma~\ref{lem:alpha-beta-gamma}.
    By Lemma~\ref{lem:p-div-ell} we have $(m,p)\mid \ell$, so we can write $m=(m,p) m_1$ and $\ell=(m,p) \ell_1$.
    We claim that if $p\mid m$ then $m_1\beta-\ell_1\alpha\in L$.
    Indeed, the lattice element
    \[
    \Delta(m\beta-\ell\alpha) = p(m_1(\Delta\beta)-\ell_1(\Delta\alpha))
    \]
    is an element of $\Delta L\cap pL=\Delta pL$ because, in this case, $p\nmid \Delta$.
    Thus $m_1\beta-\ell_1\alpha\in L$.
    Let $w=\overline{m}_1 \ell_1$, as in \eqref{eq:w-def}. Then 
    \[
    \beta-\alpha w - \overline{m}_1(m_1\beta-\ell_1\alpha) = (1-m_1\overline{m}_1)\beta\in p^{k} L'/L.
    \]
    The statement of the lemma follows.
\end{proof}

Write $2\Delta=p^\nu \Delta'$, with $p\nmid \Delta'$, and make the change of variable $d\mapsto \Delta'd$ in \eqref{eq:exp-1}.
Choose $w$ as in \eqref{eq:w-def} with $k=\lambda+\nu$ and use Lemma~\ref{lem:w-def} to choose $s\in L$ and $\gamma\in L'/L$ such that $s-\beta+\alpha w = p^{\lambda+\nu}\gamma$.
Then 
\[
    \Delta'\vprod{\alpha,\alpha w+s-\beta} = p^\lambda \vprod{2\Delta\alpha,\gamma} \equiv 0\pmod{p^\lambda},
\]
and a similar statement holds for $\Delta'\vprod{\alpha w+s-\beta,r}$ and $\Delta' q(\beta-\alpha w-s)$.
Furthermore, we have $\Delta'(mw-\ell)/\Delta \equiv 0\pmod{p^\lambda}$.
Therefore
\[
    \Delta'(f(v+w,r+s)+\tilde n) \equiv \Delta'(\tilde m v^2 - \vprod{\alpha,r}v-q(r)) + \hat n \pmod{p^\lambda},
\]
where (recalling that $\nu=0$ if $p\mid m$)
\begin{align}
    \hat n = \Delta' \frac{mw^2-2\ell w+n}{2\Delta}  \equiv
    \begin{dcases}
        \frac{n-\bar m \ell^2}{p^\nu} \pmod{p^\lambda} & \text{ if } p \nmid m, \\
        n-\bar{(m/p)}\ell^2/p \pmod{p^\lambda} & \text{ if }p\mid m.
    \end{dcases}
\end{align}
In the case $p\nmid m$ this shows that $\ell^2\equiv mn\pmod{p^\nu}$ because $\hat n$ must be an integer.

Thus
\begin{equation} \label{eq:n1-S1}
    \sum_{d(u)^\times} e_u(\Delta'd\tilde n) \mathcal S(\Delta'd,u) = \sum_{d(u)^\times} e_u(d\hat n) \mathcal S_1(d,u),
\end{equation}
where
\begin{equation}
    \mathcal S_1(d,u) = \sum_{v(u)}\sum_{r\in L/uL} e_u(d\Delta'f_1(v,r)), \qquad f_1(v,r) = \tilde m v^2 - \vprod{\alpha,r}v-q(r).
\end{equation}
Let $M$ denote the Gram matrix of $L$ and identify $L$ with $\Z^g$ so that $\vprod{x,y} = x^TMy$ and $\alpha = M^{-1}a$ for some $a\in \Z^g$. 
Then we can write $f_1(v,r) = \frac 12 x^T S x$, where $x=(v,r)\in \Z^{g+1}$ and $S$ is the block matrix
\[
    S = \ptMatrix{2\tilde m}{-a^T}{-a}{-M}.
\]
The determinant of $S$ equals $\frac{m}{\Delta}\det(-M) = -m$.

Suppose first that $p\nmid m$. 
Then by \eqref{eq:Gauss-sum-quadratic-form}, applied to the quadratic form $x\mapsto d\Delta' x^TSx$ on the lattice $\Z\oplus L$, together with \eqref{eq:gauss-odd}, we have
\begin{equation}
    \mathcal S_1(d,u) = \pfrac{-m}{u} G(u)^{g+1} = \pfrac{m_L}{u} u^{h},
\end{equation}
where we have used the fact that $g$ is odd so $(\bar 2d\Delta')^{g+1}$ is a square.
It follows that
\begin{align}
    \mathcal L_\ell(c) &= \frac{1}{c} \pfrac{m_L}{c} \sum_{u\mid c} \sum_{d(u)^\times} e_u(\hat nd) 
    = \frac{1}{c} \pfrac{m_L}{c} \sum_{u\mid c} \sum_{k\mid (u,\hat n)} \mu(u/k)k \\
    &= \pfrac{m_L}{p^\lambda} \times 
    \begin{cases}
        1 & \text{ if }\hat n \equiv 0\pmod{p^\lambda}, \\
        0 & \text{ otherwise}.
    \end{cases}
\end{align}
The condition $\hat n \equiv 0 \pmod{p^\lambda}$ is equivalent to $\ell^2\equiv mn\pmod{p^{\lambda+\nu}}$.

Now suppose that $p\mid m$.
Note that in this case many of the terms in \eqref{eq:L-ell-c-u-d-sum} are zero, so
\[
    \mathcal L_\ell(c) = c^{-h-1}  \sum_{d(c)^\times} e_c(d\hat n) \mathcal S_1(d,c).
\]
By assumption we have that $p\nmid \Delta$, so $2\Delta=\Delta'$ and
we can write
\[
    \Delta'f_1(v,\bar{\Delta}r) \equiv mv^2 - 2\bar{\Delta}q(\hat \alpha v+r) \pmod{c},
\]
where $\Delta \bar \Delta \equiv 1\pmod{c}$ and $\hat\alpha = \Delta\alpha \in L$.
Thus
\begin{equation}
    \mathcal S_1(d,c) = \sum_{v(c)}e_c(dmv^2)\sum_{r\in L/cL} e_c(-2d \bar \Delta q(r)) = p\sum_{v(c/p)}e_{c/p}(d(m/p)v^2)\sum_{r\in L/cL} e_c(-2d \bar \Delta q(r)).
\end{equation}
Note that $(m/p,c/p)=1$, so by \eqref{eq:Gauss-y} and \eqref{eq:gauss-odd}, the first Gauss sum evaluates to
\[
    \sum_{v(c/p)}e_{c/p}(d(m/p)v^2) = \pfrac{d(m/p)}{c/p} \ep_{c/p} (c/p)^{\frac 12}.
\]
For the second, since $p\nmid \Delta$, 
\eqref{eq:Gauss-sum-quadratic-form} yields
\[
    \sum_{r\in L/cL} e_c(-2d \bar \Delta q(r)) = \pfrac{(-d\bar\Delta)^g\Delta}{c} \ep_c^g c^{\frac g2}.
\]
Therefore, since $g$ is odd,
\begin{align}
    \mathcal S_1(d,c) &= p^{\frac 12} \pfrac dp \pfrac{-1}{c} \pfrac{m/p}{c/p} \ep_{c/p} \ep_c^g c^h,
\end{align}
from which it follows that
\[
    \mathcal L_\ell(c) = p^{\frac 12}c^{-1} \pfrac{m/p}{c/p} \pfrac{-1}{c} \ep_{c/p}\ep_c^g \sum_{d(c)^\times} \pfrac dp e_c(d\hat n).
\]
By \eqref{eq:twisted-exp-sum} we find that
$\mathcal L_\ell(c)=0$ unless $\hat n\equiv 0\pmod{p^{\lambda-1}}$, which we now assume. 
Then
\[
    \mathcal L_\ell(c) =  \pfrac{(-1)^{(g+1)/2}}{p^\lambda} \pfrac{m/p}{c/p}  \pfrac{-\hat n/p^{\lambda-1}}{p},
\]
because $\ep_p\ep_{c/p}\ep_c^g = (\frac{-1}{p})^{\lambda(g-1)/2+1}$.
We have 
\[
    -\frac{\hat n}{p^{\lambda-1}} = -\frac{n-\overline{(m/p)}\ell^2/p}{p^{\lambda-1}} \equiv  \overline{(m/p)} \frac{\ell^2-mn}{p^\lambda} \pmod{p},
\]
so
\[
    \mathcal L_\ell(c) = \pfrac{-m_L/p}{p^\lambda} \pfrac{(\ell^2-mn)/p^{\lambda}}{p} = \pfrac{m_L/p^*}{p^\lambda} \pfrac{p^*}{(\ell^2-mn)/p^\lambda}.
\]
Lastly, we note that the condition $\hat n \equiv 0\pmod{p^{\lambda-1}}$ is equivalent to $\ell^2\equiv mn\pmod{p^\lambda}$.

\subsection{The case $g=1$}\label{sec:rank-1}

In this section we evaluate $\mathcal L_\ell(c)$, with $c=p^\lambda$, in the remaining cases when $g=1$: $p=2$ or $(m,\Delta,p)>1$.
To match the setup of Theorem~\ref{thm:exp-sum-identity-g=1}, we take $L=\Z$ with $\vprod{x,y} = \Delta xy$ and $\alpha = \frac{a}{\Delta}$ and $\beta=\frac b{\Delta}$.

Suppose first that $p$ is odd and $p\mid (m,\Delta)$.
Then by \eqref{eq:exp-1} we have
\[
    \mathcal L_\ell(c) = c^{-2} \sum_{d(c)^\times} e_c(d\tilde n) \mathcal S(d,c).
\]
Since $m\equiv a^2 \pmod{4N}$,
we have $p\mid a$, and
since $p\parallel m$, we cannot have $p^2\mid \Delta$ (i.e.~$\nu=1$).
By replacing $r$ with $r+c/p$ in $\mathcal S(d,c)$, we find that $\mathcal S(d,c)=0$ unless $p\mid b$, and thus $p\mid n$ because $\frac n{2\Delta}-q(\beta) \in \Z$.
In what follows we make these assumptions and write
\[
    m = pm_1, \quad \Delta = p\Delta_1, \qquad (p,m_1)=(p,\Delta_1)=1,
\]
and similarly define $\ell_1$, $a_1$, $b_1$, and $n_1$.
We will need the following analogue of Lemma~\ref{lem:w-def}.

\begin{lemma} \label{lem:w-def-g=1}
    Suppose that $g=1$ and that $(m,\Delta,p)>1$.
    For $k\in \Z^+$, if
    \begin{equation}
        w \equiv  
            \bar m_1 \ell_1 \pmod{p^k}
    \end{equation}
    then $\beta-\alpha w\in L+p^{k} (L'/L)$.
\end{lemma}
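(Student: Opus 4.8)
The plan is to follow the template of the proof of Lemma~\ref{lem:w-def}, where the corresponding statement was reduced to the single algebraic fact that $m_1\beta-\ell_1\alpha\in L$. The difficulty is that the argument used there to prove this fact relied crucially on $p\nmid\Delta$ (so that $\Delta L\cap pL=\Delta pL$), and that hypothesis now fails, since we are in the case $p\mid(m,\Delta)$. I would therefore keep the final formal manipulation intact but replace the step establishing $m_1\beta-\ell_1\alpha\in L$ with a direct coordinate computation that does not require $p\nmid\Delta$.

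First I would record the divisibilities available here. We are in the situation $\mathcal S(d,c)\neq 0$, so Lemma~\ref{lem:p-div-ell} gives $(m,c)=p\mid\ell$; hence $\ell_1=\ell/p$ is an integer and $\ell_1$ (and likewise $a_1,b_1,n_1$) is well defined. Together with $p\mid a$, $p\mid b$, $p\mid m$, $p\mid n$ and $p\parallel\Delta$ we may write $a=pa_1$, $b=pb_1$, $\ell=p\ell_1$, $m=pm_1$, $\Delta=p\Delta_1$ with $(p,m_1)=(p,\Delta_1)=1$, so that $\alpha=a_1/\Delta_1$ and $\beta=b_1/\Delta_1$ as elements of $L'/L$.

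Next I would establish $m_1\beta-\ell_1\alpha\in L$. Starting from the identity used in Lemma~\ref{lem:w-def},
\[
    m\beta-\ell\alpha=\tilde m(2\Delta\beta)-\tilde\ell(\Delta\alpha)\in L,
\]
where the cross term $\Delta(\vprod{\alpha,\alpha}\beta-\vprod{\alpha,\beta}\alpha)$ vanishes identically when $g=1$, I read off $\Delta\mid mb-\ell a$. Substituting the factorizations gives $p\Delta_1\mid p^2(m_1b_1-\ell_1a_1)$, and cancelling and using $(p,\Delta_1)=1$ yields $\Delta_1\mid m_1b_1-\ell_1a_1$. Hence
\[
    m_1\beta-\ell_1\alpha=\frac{m_1b_1-\ell_1a_1}{\Delta_1}\in\Z=L.
\]

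Finally, taking $\bar m_1$ with $\bar m_1 m_1\equiv 1\pmod{p^k}$ and $w\equiv\bar m_1\ell_1\pmod{p^k}$, I would use the decomposition
\[
    \beta-\alpha w=\bar m_1(m_1\beta-\ell_1\alpha)+(1-m_1\bar m_1)\beta+(\bar m_1\ell_1-w)\alpha.
\]
The first term lies in $L$ by the previous step; the second lies in $p^kL'$ because $1-m_1\bar m_1\equiv 0\pmod{p^k}$ and $\beta\in L'$; and the third lies in $p^kL'$ because $\bar m_1\ell_1-w\equiv 0\pmod{p^k}$ and $\alpha\in L'$. Therefore $\beta-\alpha w\in L+p^k(L'/L)$, as claimed. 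I expect the only genuine obstacle to be the middle step, proving $m_1\beta-\ell_1\alpha\in L$ now that $p\mid\Delta$; once that divisibility is secured, the closing decomposition is the same formal identity as in Lemma~\ref{lem:w-def}, and the hypothesis $w\equiv\bar m_1\ell_1\pmod{p^k}$ enters exactly to annihilate the term $(\bar m_1\ell_1-w)\alpha$.
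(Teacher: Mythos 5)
Your proof is correct and follows essentially the same route as the paper: both hinge on the fact that the cross term $\Delta(\vprod{\alpha,\alpha}\beta-\vprod{\alpha,\beta}\alpha)$ vanishes identically when $g=1$, deduce $m_1\beta-\ell_1\alpha\in L$, and then close with the same decomposition as in Lemma~\ref{lem:w-def}. The only (immaterial) difference is in the middle step: the paper observes directly that $m\beta-\ell\alpha=2\tilde m b-\tilde\ell a\in pL$ and divides by $p$, whereas you pass through the divisibility $\Delta\mid mb-\ell a$ and cancel using $(p,\Delta_1)=1$; both rest on the same inputs $p\mid a$, $p\mid b$.
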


\begin{proof}
    Since $g=1$ we have $\vprod{\alpha,\alpha}\beta-\vprod{\alpha,\beta}\alpha = 0$, so
    $m\beta-\ell\alpha = 2\tilde m b - \tilde \ell a \in pL$.
    It follows that $m_1\beta-\ell_1\alpha\in L$.
    The remainder of the proof follows the proof of Lemma~\ref{lem:w-def}.
\end{proof}

The discussion following Lemma~\ref{lem:w-def} shows that 
\begin{equation}
    \mathcal L_\ell(c) = c^{-2} \sum_{d(c)^\times} e_c(d\hat n) \mathcal S_1(d,c),
\end{equation}
with $\mathcal S_1(d,c)$ as in Section~\ref{sec:complete-square} and
\begin{equation}
    \hat n \equiv n_1 - \bar m_1 \ell_1^2 \pmod{p^\lambda}.
\end{equation}
Note that $\Delta'=2\Delta_1$ and
\begin{equation}
    f_1(v,r) = \tilde m v^2 - arv - \tfrac 12 \Delta r^2.
\end{equation}
We have $\Delta' f_1(v,r) = m_1 v^2 - p(a_1 v + \Delta_1 r)^2$, so after a change of variables we obtain
\begin{equation}
    \mathcal L_\ell(c) = pc^{-2}\sum_{d(c)^\times} e_c(d\hat n)\sum_{v(c)} e_c(dm_1 v^2) \sum_{r(c/p)} e_{c/p}(-dr^2).
\end{equation}
The rest of the computation resembles the case $p\mid m$ in Section~\ref{sec:complete-square}.
Using \eqref{eq:gauss-odd}, \eqref{eq:Gauss-y}, and \eqref{eq:twisted-exp-sum}, we find that $\mathcal L_\ell(c) = 0$ unless $p^{\lambda-1}\mid \hat n$, in which case we have
\begin{align}
    \mathcal L_\ell(c) &= \pfrac{m_1}{p^\lambda} \pfrac{-1}{p^{\lambda-1}} \pfrac{\hat n/p^{\lambda-1}}{p} \ep_{p^\lambda} \ep_{p^{\lambda-1}} \ep_p \\
    &= \pfrac{m_L/p^*}{p^{\lambda+1}}\pfrac{p^*}{(\ell^2-mn)/p^{\lambda+1}}.
\end{align}
In this case the condition $p^{\lambda-1}\mid \hat n$ is equivalent to $\ell^2\equiv mn\pmod{p^{\lambda+1}}$.

Now suppose that $p=2$ and $m$ is odd.
Since $(m,\Delta,p)=1$, we follow Section~\ref{sec:complete-square} to get
\begin{equation}
    \mathcal L_\ell(2^\lambda) = 2^{-\lambda} \pfrac{m}{2^\lambda} \sum_{j=0}^\lambda \pfrac{m}{2}^j  2^{-j} \sum_{d(2^j)^\times} e_{2^j}(d\hat n) \mathcal S_1(d,2^j),
\end{equation}
where $\hat n  = (n-\bar m \ell^2)/2^{\nu}$ and
\begin{equation}
    \mathcal S_1(d,2^j) = \sum_{v(2^j)}\sum_{r(2^j)} e_{2^j}(d\Delta'f_1(v,r)), \qquad f_1(v,r) = \tilde m v^2 - a r v - \tfrac 12 \Delta r^2.
\end{equation}
The congruence $m\equiv a^2 \pmod{4N}$ shows that $a$ is odd and that $m \equiv 1\pmod{4}$.
By Corollary~3.1 of \cite{alaca-doyle} we have\footnote{There are several cases to tediously check, but all yield the same result. Alternatively, one can prove this using several applications of \eqref{eq:gauss-even}.}
\begin{equation}
    \pfrac{m}{2}^j 2^{-j} \mathcal S_1(d,2^j) = 1
\end{equation}
when $(d,2^j)=1$,
therefore
\begin{align}
    \mathcal L_\ell(2^\lambda) &= 2^{-\lambda} \pfrac{m}{2^\lambda} \sum_{j=0}^\lambda \sum_{d(2^j)^\times} e_{2^j}(d\hat n) = 2^{-\lambda} \pfrac{m}{2^\lambda} \sum_{d(2^\lambda)} e_{2^\lambda}(d\hat n) \\
    &=
    \pfrac{m_L}{2^\lambda}
    \begin{dcases}
        1 & \text{ if } \hat n \equiv 0 \pmod{2^\lambda}, \\
        0 & \text{ otherwise}.
    \end{dcases}
\end{align}
Here the condition $\hat n \equiv 0 \pmod{2^\lambda}$ is equivalent to $\ell^2\equiv mn \pmod{2^{\lambda+\nu}}$.

Finally, suppose that $p=2$ and that $m$ is even.
Then
\[
    \mathcal L_\ell(2^\lambda) = 2^{-2\lambda}  \sum_{d(2^\lambda)^\times} e_{2^\lambda}(d\tilde n) \mathcal S(d,2^\lambda).
\]
Define $\mu$ by $2^{\mu}\parallel m$ and recall that $\nu$ satisfies $2^\nu \parallel 2\Delta$.
Since $m$ is a fundamental discriminant, we have $\mu\in \{2,3\}$. 
Furthermore, $a$ is even and $\frac m4 \equiv \frac{a^2}{4} \pmod {\frac 12 \Delta}$.
Since $\frac m4 \equiv 2,3\pmod{4}$, we see that $4\nmid \frac 12 \Delta$.
In other words, $\nu \in \{2,3\}$.
In what follows, we assume that $\lambda\geq 3$; when $\lambda\in \{1,2\}$ there are only finitely many cases to check.

Suppose first that $\mu=\nu=2$.
Then $m_1:=\frac m4$ and $D:=\frac 12\Delta$ are odd.
It follows from two applications of \eqref{eq:gauss-even} that $\mathcal S(d,2^\lambda)=0$ unless $b$ is even, in which case we have
\begin{align}
    \mathcal S(d,2^\lambda) 
    &= (1+i)\ep_{-dD}^{-1}\pfrac{2^\lambda}{-dD} 2^{\frac \lambda2} e_{2^\lambda}(d\bar D b_1^2) \sum_{v(2^\lambda)} e_{2^\lambda}(d\bar D(m_1v^2-\ell_1 v)) \\
    &= (1+i)^2 \ep_{-dD}^{-1} \ep_{dDm_1}^{-1} \pfrac{2^\lambda}{-m_1} e_{2^\lambda}(d\bar D (b_1^2-\bar m_1 \ell_2^2))2^\lambda,
\end{align}
where $b=2b_1$ and $\ell=2\ell_1=4\ell_2$.
(Note that $4\mid\ell$ by Lemma~\ref{lem:p-div-ell}.)
Since $m_1\equiv 3\pmod{4}$ we have $\ep_{-dD}^{-1} \ep_{dDm_1}^{-1} = -\pfrac{-1}{dD}$.
Also, $4\mid n$ because $b$ is even, so we can write $n=4n_1$.
By replacing $d$ by $Dd$ and applying \eqref{eq:twisted-exp-sum-2}, we obtain
\begin{align}
    \mathcal L_\ell(2^\lambda) 
    &= -2^{1-\lambda}i \pfrac{2^\lambda}{-m_1} \sum_{d(2^\lambda)^\times} \pfrac{-1}{d} e_{2^\lambda}(d(n_1-\bar m_1\ell_2^2)) \\
    &= 
    \begin{dcases}
        \pfrac{2^\lambda}{-m_1} \pfrac{-4}{(n_1-\bar m_1\ell_2^2)/2^{\lambda-2}} & \text{ if } n_1 \equiv \bar m_1 \ell_2^2 \pmod{2^{\lambda-2}}, \\
        0 & \text{ otherwise}.
    \end{dcases}
\end{align}
We conclude that $\mathcal L_\ell(2^\lambda)=0$ unless $\ell^2 \equiv mn \pmod{2^{\lambda+2}}$, in which case
\begin{equation}
    \mathcal L_\ell(2^\lambda) = \pfrac{m/2^*}{2^\lambda} \pfrac{2^*}{(\ell^2-mn)/2^{\lambda+2}}.
\end{equation}

If $(\mu,\nu)=(3,2)$ then by a similar argument we obtain
\begin{equation}
    \mathcal S_1(d,2^\lambda) = \ep_{m_2}^{-1} \pfrac{2^\lambda}{-m_2} \pfrac{2^*}{dD} e_{2^\lambda}(d\bar D(b_1^2-2\bar m_2\ell_3^2)) 2^{\lambda+\frac 32},
\end{equation}
where $m=8m_2$ and $\ell=8\ell_3$.
Thus
\begin{equation}
    \mathcal L_\ell(2^\lambda) = 
    \begin{dcases}
        \pfrac{2^{\lambda+1}}{-m_2} \pfrac{2^*}{(n_1^2-2\bar m_2\ell_3^2)/2^{\lambda-3}} & \text{ if } n_1^2\equiv 2\bar m_2 \ell_3^2 \pmod{2^{\lambda-3}}, \\
        0 & \text{ otherwise}.
    \end{dcases}
\end{equation}
In other words, $\mathcal L_\ell(2^\lambda)=0$ unless $\ell^2 \equiv mn \pmod{2^{\lambda+2}}$, in which case we have
\[
    \mathcal L_\ell(2^\lambda) = \pfrac{m/2^*}{2^\lambda} \pfrac{2^*}{(\ell^2-mn)/2^{\lambda+2}}.
\]

The remaining cases $(\mu,\nu)=(2,3)$ and $(\mu,\nu)=(3,3)$ are similar to the previous two, except that $\frac 12 \Delta$ is even and $\tilde m$ is odd, so we evaluate the $v$-sum first, then the $r$-sum.

\subsection{The case when $g>1$ and either $p=2$ or $p$ is odd and $(m,\Delta,p)>1$}

\label{sec:counting}

In each of these cases we have
\begin{equation}
    \mathcal L_\ell(p^\lambda) = p^{-\lambda h} \left(N(p^\lambda)-p^g N(p^{\lambda-1})\right),
\end{equation}
and we will show that
\begin{enumerate}
    \item $\mathcal L_\ell(p^\lambda)=0$ unless $\ell^2\equiv mn\pmod{2Np^{2\lf}}$, and
    \item $|\xi_{\alpha,\beta}(\ell,m,n,p^\lambda)| \leq p^{A\nu g}$ for some absolute constant $A$.
\end{enumerate}
Then, because $|\xi_{\alpha,\beta}(\ell,m,n,p^\lambda)|\leq 1$ for all other primes, we conclude that
\begin{equation}
    |\xi_{\alpha,\beta}(\ell,m,n,c)| \leq \prod_{\substack{p\mid 2(\Delta,c) \\ p^\nu \parallel 2\Delta}} p^{A\nu g} \leq |2\Delta|^{Ag} \ll_L 1.
\end{equation}

We begin with a simple observation to motivate our approach.
If $k\geq 1$ and $(v_0,r_0)$ is a solution to the congruence $f(v,r)+\tilde n \equiv 0\pmod{p^{k-1}}$, then $(v_0+p^{k-1}x,r_0+p^{k-1}y)$ is a solution to $f(v,r)+\tilde n \equiv 0 \pmod{p^k}$ if and only if
\begin{equation} \label{eq:x-y-mod-p}
    (2\tilde m v_0 - \vprod{\alpha,r_0}-\tilde \ell)x + \vprod{\alpha v_0+r_0-\beta,y} \equiv -\frac{f(v_0,r_0)+\tilde n}{p^{k-1}} \pmod{p}.
\end{equation}
If $p\nmid (2\tilde mv_0-\vprod{\alpha,r_0}-\tilde \ell)$ then there are $p^g$ pairs $(x,y)$ satisfying \eqref{eq:x-y-mod-p}: for each $y \in L/pL$ there is exactly one $x\in \Z/p\Z$ for which \eqref{eq:x-y-mod-p} holds.
A similar argument shows that there are $p^g$ pairs satisfying \eqref{eq:x-y-mod-p} as long as there exists a $y\in L/pL$ such that $\vprod{\alpha v_0+r_0-\beta,y}$ is not divisible by $p$.
Thus $N(p^k)-p^g N(p^{k-1})=0$ unless there is a solution $(v_0,r_0)$ for which $2\tilde mv_0-\vprod{\alpha,r_0}-\tilde \ell \equiv 0\pmod{p}$ and $\vprod{\alpha v_0+r_0-\beta,y} \equiv 0\pmod{p}$ for all $y\in L/pL$.

Generalizing this idea, for fixed $\lambda$ and for $j\leq k\leq \lambda$ let $\mathcal M_j(p^k)$ be the set of pairs $(v,r)$ with $v\in \Z/p^\lambda\Z$ and $r\in L/p^\lambda L$ such that $(v,r)$ is a solution to the congruences
\begin{align*}
    f(v,r)+\tilde n &\equiv 0 \pmod{p^k}, \\
    2\Tilde{m}v-\vprod{\alpha,r}-\Tilde{\ell} &\equiv 0\pmod{p^j}, \\
    \vprod{\alpha v+r-\beta,y} &\equiv 0 \pmod{p^j} \qquad \forall y\in L.
\end{align*}
Then $\mathcal M_{j+1}(p^k) \subseteq \mathcal M_j(p^k)$ for each $j\leq k-1$.
Let $\mathcal M^*_k(p^k) = \mathcal M_k(p^k)$ and, 
for $j\leq k-1$, let $\mathcal M^*_j(p^k)= \mathcal M_j(p^k)\setminus \mathcal M_{j+1}(p^k)$. 
We write 
\[
M_j(p^k) = \#\mathcal M_j(p^k) \quad \text{ and } \quad M_j^*(p^k) = \#\mathcal M_j^*(p^k).
\]
Then we have
\begin{equation}
    N(p^k) = p^{-(g+1)(\lambda-k)}M_0(p^k),
\end{equation}
so
\begin{equation} \label{eq:N-M-translation}
    N(p^\lambda) - p^g N(p^{\lambda-1}) = M_0(p^\lambda) - \tfrac 1p M_0(p^{\lambda-1}).
\end{equation}

\begin{lemma} \label{lem:N=M-lambda/2}
    Notation as above,
    we have 
    \begin{equation} \label{eq:N=M-lambda/2}
    N(p^\lambda)-p^gN(p^{\lambda-1}) = M_{\lf}(p^\lambda)-\tfrac{1}{p}M_{\lf}(p^{\lambda-1}).
    \end{equation}
\end{lemma}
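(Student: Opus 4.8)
The plan is to work entirely inside the fixed ambient space $(\Z/p^\lambda\Z)\times(L/p^\lambda L)$, in which all the sets $\mathcal M_j(p^k)$ live, and to reduce everything to \eqref{eq:N-M-translation}. By that identity it suffices to prove
\[
    M_0(p^\lambda) - M_{\lf}(p^\lambda) = \tfrac 1p\left(M_0(p^{\lambda-1}) - M_{\lf}(p^{\lambda-1})\right).
\]
Telescoping the filtration $\mathcal M_{\lf}(p^k)\subseteq\cdots\subseteq\mathcal M_0(p^k)$ gives $M_0(p^k)-M_{\lf}(p^k)=\sum_{j=0}^{\lf-1}M_j^*(p^k)$, so it is enough to establish the term-by-term identity
\[
    M_j^*(p^\lambda) = \tfrac 1p\, M_j^*(p^{\lambda-1}) \qquad (0\le j\le \lf-1).
\]
For $\lambda=1$ the range of $j$ is empty and the lemma is trivial, so I assume $\lambda\ge 2$.

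Since $f\equiv 0\pmod{p^\lambda}$ is stronger than $f\equiv 0\pmod{p^{\lambda-1}}$ while the gradient conditions do not involve $k$, we have $\mathcal M_j^*(p^\lambda)\subseteq\mathcal M_j^*(p^{\lambda-1})$, and $\mathcal M_j^*(p^\lambda)$ is exactly the subset of $\mathcal M_j^*(p^{\lambda-1})$ on which $f+\tilde n\equiv 0\pmod{p^\lambda}$; the claim is thus that a $1/p$ fraction of $\mathcal M_j^*(p^{\lambda-1})$ satisfies the stronger congruence. To see this I would let the group $G\cong(\Z/p\Z)^{g+1}$ of translations $(v,r)\mapsto(v+p^{\lambda-1-j}x,\,r+p^{\lambda-1-j}y)$, with $x\in\Z/p\Z$ and $y\in L/pL$, act on the ambient space. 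Taylor-expanding, the shift changes $f(v,r)$ by a linear term $p^{\lambda-1-j}\big((2\tilde m v-\vprod{\alpha,r}-\tilde\ell)x+\vprod{\alpha v+r-\beta,\,y}\big)$ plus a quadratic term $p^{2(\lambda-1-j)}f_1(x,y)$. The inequality $j\le\lf-1$ is precisely what forces $2(\lambda-1-j)\ge\lambda$, killing the quadratic term mod $p^\lambda$, and also forces $\lambda-1-j\ge j+1$, so the gradient of $f$ changes only mod $p^{j+1}$. Hence $G$ preserves both $f+\tilde n\equiv 0\pmod{p^{\lambda-1}}$ and the exact gradient order $j$, i.e.\ $G$ acts on $\mathcal M_j^*(p^{\lambda-1})$.

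Restricting to $\mathcal M_j^*(p^{\lambda-1})$, the shift changes the integer $\frac{f(v,r)+\tilde n}{p^{\lambda-1}}\bmod p$ by the linear form $(g_1'x+\vprod{z',y})\bmod p$, where $g_1'=(2\tilde m v-\vprod{\alpha,r}-\tilde\ell)/p^j$ and, using that $\vprod{\alpha v+r-\beta,y}\equiv 0\pmod{p^j}$ for all $y\in L$ forces $\alpha v+r-\beta\in p^jL'$, we set $z'=(\alpha v+r-\beta)/p^j\in L'$. The crux is that this form is surjective onto $\Z/p\Z$: membership in $\mathcal M_j^*$ means $(v,r)\notin\mathcal M_{j+1}$, so either $g_1'\not\equiv 0\pmod p$ or $z'\notin pL'$, and since the pairing $L'/pL'\times L/pL\to\Z/p\Z$ is perfect the latter makes $y\mapsto\vprod{z',y}\bmod p$ nonzero. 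As $G$ acts freely, each orbit has size $p^{g+1}$ and $\frac{f+\tilde n}{p^{\lambda-1}}\bmod p$ is equidistributed over its $p$ residues, with exactly $p^g$ elements per residue. Thus exactly $1/p$ of $\mathcal M_j^*(p^{\lambda-1})$ lies in $\mathcal M_j^*(p^\lambda)$, giving the desired identity. I expect the main obstacle to be the bookkeeping that pins $j\le\lf-1$ as the exact threshold making the shift simultaneously annihilate the quadratic remainder mod $p^\lambda$ and fix the gradient order, together with the perfect-pairing argument guaranteeing the linear form is nonzero precisely off $\mathcal M_{j+1}$.
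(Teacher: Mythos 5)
Your proof is correct and follows essentially the same route as the paper's: the same reduction via \eqref{eq:N-M-translation} and telescoping of the filtration to the key claim $M_j^*(p^\lambda)=\tfrac1p M_j^*(p^{\lambda-1})$ for $j<\lf$, and the same translation-and-equidistribution count at scale $p^{\lambda-1-j}$, except that the paper translates by $\Z/p^{j+1}\Z\times L/p^{j+1}L$ and splits your nonzero-linear-form observation into two cases (unit $x$-coefficient, or a basis vector of $L$ pairing with $\alpha v+r-\beta$ to a unit multiple of $p^j$). The one imprecision is that your ``action'' of $(\Z/p\Z)^{g+1}$ is not literally well defined on $(\Z/p^\lambda\Z)\times(L/p^\lambda L)$ when $j\geq 1$, since $p^{\lambda-1-j}(x+p)\not\equiv p^{\lambda-1-j}x\pmod{p^\lambda}$; this is harmless, because your size-$p^{g+1}$ blocks with representatives $x,y\in\{0,\dots,p-1\}$ still partition each coset of the genuine subgroup $p^{\lambda-1-j}\bigl(\Z/p^\lambda\Z\times L/p^\lambda L\bigr)$ (which is the paper's translation group), and your $1/p$-fraction count goes through on each block unchanged.
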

\begin{proof}
By \eqref{eq:N-M-translation} we have
\begin{equation}
    N(p^\lambda)-p^gN(p^{\lambda-1}) = \sum_{j=0}^{\lambda}\left(M^*_j(p^\lambda)-\tfrac{1}{p}M^*_j(p^{\lambda-1})\right),
\end{equation}
so to prove \eqref{eq:N=M-lambda/2} it suffices to show that
\begin{equation} \label{eq:Mj-want}
    M_j^*(p^\lambda) = \tfrac 1p M_j^*(p^{\lambda-1}) \quad \text{ for all }j<\ltf.
\end{equation}

Suppose that $j<\lf$; then $2j+2\leq \lambda$.
Let $(v,r)\in \mathcal M^*_j(p^{\lambda-1})$.
We claim that for every $x\in \Z/p^{j+1}\Z$ and every $y\in L/p^{j+1}L$,
\begin{equation} \label{eq:new-xy-in-M-j}
    (v+xp^{\lambda-j-1},r+yp^{\lambda-j-1}) \in \mathcal M_j^*(p^{\lambda-1}).
\end{equation}
Indeed,
\begin{align*}
    f(v+xp^{\lambda-j-1},r+yp^{\lambda-j-1}) +\tilde n
    &\equiv (2\Tilde{m}v-\vprod{\alpha,r}-\Tilde{\ell})xp^{\lambda-j-1}-\vprod{\alpha v+r-\beta,y}p^{\lambda-j-1} \\
    &\equiv 0 \pmod{p^{\lambda-1}}
\end{align*}
and 
\begin{align}
    2\Tilde{m}(v+xp^{\lambda-j-1})-\vprod{\alpha,r+yp^{\lambda-j-1}}-\Tilde{\ell} &\equiv 2\Tilde{m}v-\vprod{\alpha,r}-\Tilde{\ell}  \pmod{p^{j+1}}, \\
    \vprod{\alpha (v+xp^{\lambda-j-1})+r+yp^{\lambda-j-1}-\beta,z}&\equiv \vprod{\alpha v+r-\beta,z}  \pmod{p^{j+1}}
\end{align}
for all $z\in L$.

A similar argument shows that $(v+xp^{\lambda-j-1},r+yp^{\lambda-j-1})\in \mathcal M^*_j(p^\lambda)$ if and only if
\begin{equation}
f(v,r)+\tilde n+(2\Tilde{m}v-\vprod{\alpha,r}-\Tilde{\ell})xp^{\lambda-j-1}-\vprod{\alpha v+r-\beta,y}p^{\lambda-j-1} \equiv 0\pmod{p^\lambda},
\end{equation}
which holds if and only if
\begin{equation}\label{eq:in-M-lambda}
\frac{f(v,r)+\tilde n}{p^{\lambda-1}}+\frac{2\Tilde{m}v-\vprod{\alpha,r}-\Tilde{\ell}}{p^j}x - \frac{\vprod{\alpha v+r-\beta,y}}{p^j} \equiv 0\pmod{p}.
\end{equation}
We consider two cases.
\begin{enumerate}
    \item If 
    $2\Tilde{m}v-\vprod{\alpha,r}-\Tilde{\ell} \not\equiv 0\pmod{p^{j+1}}$, then $(2\tilde m v-\vprod{\alpha,r}-\tilde \ell)/p^j$ is invertible mod $p$, so for each $y$ the pair $(x,y)$ satisfies \eqref{eq:in-M-lambda} for $x$ in exactly one residue class mod $p$.
    \item Suppose that
    $2\Tilde{m}v-\vprod{\alpha,r}-\Tilde{\ell} \equiv 0\pmod{p^{j+1}}$.
    We identify $L$ with $\Z^g$ as explained in Section~\ref{sec:background}.
    Then, by \eqref{eq:new-xy-in-M-j}, there exists a basis $\{e_1,\ldots,e_g\}\subseteq \Z^g$ such that
    \[
        \vprod{\alpha v+r-\beta,e_i} \equiv 0 \pmod{p^j} \quad \text{ for } 1\leq i \leq g,
    \]
    but
    \[
        \vprod{\alpha v+r-\beta,e_1} \not\equiv 0 \pmod{p^{j+1}}.
    \]
    Write $y=\sum_i a_i e_i$, with $a_i\in \Z/p^{j+1}\Z$.
    Then \eqref{eq:in-M-lambda} holds if and only if
    \[
        a_1 \vprod{\alpha v+r-\beta,e_1}/p^j \equiv -\sum_{i=2}^g a_i \vprod{\alpha v+r-\beta,e_i}/p^j + (f(v,r)+\tilde n)/p^{\lambda-1} \pmod{p}.
    \]
    For each choice of $x$ and $a_2,\ldots,a_g$, the latter congruence holds for $a_1$ in exactly one residue class mod $p$.
\end{enumerate}
It follows that $M_j^*(p^\lambda) = \frac 1p M_j^*(p^{\lambda-1})$, and
this proves \eqref{eq:Mj-want}.
\end{proof}

\begin{lemma}
    Suppose that $p=2$ or $p\mid m$.
    If $N(p^\lambda)\ne p^gN(p^{\lambda -1})$ then \[\ell^2\equiv mn\pmod{2Np^{2\lfloor\frac{\lambda}{2}\rfloor}}.\]
\end{lemma}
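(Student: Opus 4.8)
The plan is to reduce, via Lemma~\ref{lem:N=M-lambda/2}, to the existence of a single near-critical solution, and then read the desired congruence off the determinant of an augmented Gram matrix. Write $j=\lf$. If $N(p^\lambda)\ne p^gN(p^{\lambda-1})$, then by Lemma~\ref{lem:N=M-lambda/2} we have $M_j(p^\lambda)\ne\tfrac1p M_j(p^{\lambda-1})$, so at least one of $\mathcal M_j(p^\lambda)$, $\mathcal M_j(p^{\lambda-1})$ is nonempty. Fix $k\in\{\lambda,\lambda-1\}$ and an integral representative $x_0=(v_0,r_0)\in\Z\times L$ of a pair in $\mathcal M_j(p^k)$; since $2j\le\lambda$ we have $k\ge 2j-1$, and by definition of $\mathcal M_j(p^k)$ the point $x_0$ satisfies $f(v_0,r_0)+\tilde n\equiv 0\pmod{p^k}$ together with the two gradient congruences modulo $p^j$.

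The second step is to package $f+\tilde n$ as a quadratic form and identify $\ell^2-mn$ with a determinant. Writing $x=(v,r)$ and using $\vprod{\alpha,r}=a^Tr$, $\vprod{\beta,r}=b^Tr$, $q(r)=\tfrac12 r^TMr$, we have $f(v,r)+\tilde n=\tfrac12 x^TSx+\mathfrak l^Tx+\tilde n$ with $S=\ptMatrix{2\tilde m}{-a^T}{-a}{-M}$ (so that $\det S=-m$) and $\mathfrak l=(-\tilde\ell,b)^T$. I would then consider the integral symmetric matrix
\[
    \widetilde S=\begin{pmatrix} 2\tilde n & -\tilde\ell & b^T \\ -\tilde\ell & 2\tilde m & -a^T \\ b & -a & -M \end{pmatrix},
\]
and compute $\det\widetilde S$ by taking the Schur complement of the $-M$ block. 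That complement is $\left(\begin{smallmatrix} n/\Delta & -\ell/\Delta \\ -\ell/\Delta & m/\Delta\end{smallmatrix}\right)$, giving (using that $g$ is odd) the exact identity $\det\widetilde S=(\ell^2-mn)/\Delta$. In particular $\Delta\mid \ell^2-mn$, so it remains to prove $p^{2j}\mid\det\widetilde S$.

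For the crucial third step, form the primitive integer vector $w_0=(1,v_0,r_0)$ and the unimodular matrix $U=\left(\begin{smallmatrix}1&0\\ x_0&I\end{smallmatrix}\right)$ with first column $w_0$. A short computation shows the $(1,1)$ entry of the symmetric matrix $U^T\widetilde S U$ equals $2\bigl(f(v_0,r_0)+\tilde n\bigr)\equiv 0\pmod{p^k}$, while its first column below the corner is $\mathfrak l+Sx_0\equiv 0\pmod{p^j}$ by the two gradient congruences; by symmetry the first row vanishes mod $p^j$ as well. The bordered-determinant identity $\det\left(\begin{smallmatrix} c&u^T\\ u&S\end{smallmatrix}\right)=c\det S-u^T\operatorname{adj}(S)u$ then yields
\[
    \det\widetilde S=2\bigl(f(v_0,r_0)+\tilde n\bigr)\det S-(\mathfrak l+Sx_0)^T\operatorname{adj}(S)(\mathfrak l+Sx_0).
\]
Since $\mathfrak l+Sx_0\equiv 0\pmod{p^j}$ and $\operatorname{adj}(S)$ is integral, the second term is divisible by $p^{2j}$. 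For the first term, $\det S=-m$, so its $p$-adic valuation is at least $(2j-1)+v_p(2m)$; the hypothesis $p=2$ or $p\mid m$ is exactly what forces $v_p(2m)\ge 1$, giving $p^{2j}\mid$ first term. Hence $p^{2j}\mid\det\widetilde S=(\ell^2-mn)/\Delta$, so $2Np^{2j}\mid\ell^2-mn$, which is the claim.

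The main obstacle is the passage from $p^{j}$ to $p^{2j}$: the bare fact that $\widetilde S$ annihilates $w_0$ mod $p^j$ only gives $p^j\mid\det\widetilde S$ (as $\operatorname{diag}(p^j,1)$ shows), so extra structure is essential. The full $p^{2j}$ rests on two ingredients working together—the symmetry of $\widetilde S$, which makes \emph{both} the first row and the first column of $U^T\widetilde S U$ vanish mod $p^j$ and so sends the off-diagonal contribution into $p^{2j}\Z$, and the extra factor of $p$ in the diagonal term supplied precisely by the hypothesis $p=2$ or $p\mid m$ through $\det S=-m$. I would double-check the bookkeeping on $k$ (the worst case being $k=2j-1$, when $\lambda$ is even and the witness lies in $\mathcal M_j(p^{\lambda-1})$) and the fundamental-discriminant constraint $v_p(m)=1$ for odd $p\mid m$, to be sure the valuation count is never off by one.
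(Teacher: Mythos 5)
Your proof is correct, and it reaches the congruence by a genuinely different route from the paper's. The paper begins the same way (Lemma~\ref{lem:N=M-lambda/2} plus a witness; note that since $\mathcal M_j(p^\lambda)\subseteq\mathcal M_j(p^{\lambda-1})$ you may always take $k=\lambda-1$, so your two-case bookkeeping over $k\in\{\lambda,\lambda-1\}$ collapses), but then argues entirely with bilinear-form identities: it sets $\gamma=\alpha v+r-\beta$, $\hat\gamma=\Delta\gamma$, $\hat\alpha=\Delta\alpha$, expands $mn-\ell^2=m(mv^2-2\ell v+n)-(mv-\ell)^2$, uses the hypothesis $p=2$ or $p\mid m$ to absorb the extra factor $m$ into the congruence $f(v,r)+\tilde n\equiv 0\pmod{p^{\lambda-1}}$, and kills the critical minor $\vprod{\hat\alpha,\alpha}\vprod{\hat\gamma,\gamma}-\vprod{\hat\alpha,\gamma}^2$ modulo $2Np^{2j}$ by applying Lemma~\ref{lem:alpha-beta-gamma} (Desnanot--Jacobi) to $\gamma_1=p^{-j}\gamma\in L'$. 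You instead push the paper's own bordering trick (it forms $S$ with $\det S=-m$ in Section~\ref{sec:background} and Section~\ref{sec:complete-square}) one dimension further: you realize $(\ell^2-mn)/\Delta$ as $\det\widetilde S$ via a Schur complement, translate membership in $\mathcal M_j(p^k)$ into congruences on the corner and border of $U^T\widetilde S U$, and extract $p^{2j}$ from the bordered-determinant identity, integrality of $\operatorname{adj}(S)$, and $v_p(2\det S)=v_p(2m)\geq 1$. Your version buys a self-contained linear-algebra argument that makes the mechanism transparent (symmetry squares the gradient congruence; the factor $2m$ supplies the last power of $p$) and bypasses Lemma~\ref{lem:alpha-beta-gamma} altogether; the paper's version buys economy within its own framework, staying in the discriminant-group language and reusing a lemma it needs elsewhere anyway. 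At bottom the two are close cousins---integrality of the adjugate and Lemma~\ref{lem:alpha-beta-gamma} encode the same minor identities---but neither proof is a rewording of the other. One last remark: your worry about $v_p(m)=1$ for odd $p\mid m$ is unnecessary, since your valuation count only ever uses $v_p(2m)\geq 1$.
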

\begin{proof}
Let $j=\lfloor\tfrac{\lambda}{2}\rfloor.$ 
By Lemma~\ref{lem:N=M-lambda/2}, $N(p^\lambda)=p^gN(p^{\lambda -1})$ unless 
\[
    M_j(p^\lambda) \ne \tfrac{1}{p} M_j(p^{\lambda-1}).
\] 
If this is the case, then $\mathcal M_j(p^{\lambda-1})$ is nonempty, so let $(v,r)\in \mathcal M_j(p^{\lambda-1})$.
Then we have 
\begin{align}
    f(v,r) + \tilde n &\equiv 0 \pmod{p^{\lambda-1}}, \label{eq:M1} \\
    2\Tilde{m}v-\vprod{\alpha,r}-\Tilde{\ell} &\equiv 0 \pmod{p^j}, \label{eq:M2} \\
    \vprod{\alpha v+r-\beta,y} & \equiv 0 \pmod{p^j} \quad \text{ for all } y\in L. \label{eq:M3}
\end{align}

Let $\gamma=\alpha v+r-\beta \in L'$ and $\hat \gamma = \Delta \gamma \in L$.
It will be convenient to rewrite the congruences \eqref{eq:M1}--\eqref{eq:M3} in terms of $\gamma$ and $\hat \gamma$.
First, a calculation shows that
\begin{equation}
    mv^2-2\ell v+n - \vprod{\hat \gamma,\gamma} = 2\Delta(f(v,r)+\tilde n),
\end{equation}
so by \eqref{eq:M1} we have
\begin{equation} \label{eq:M1'}
    m(mv^2-2\ell v+n) \equiv m\vprod{\hat \gamma,\gamma} \pmod{2Np^{\lambda}}.
\end{equation}
Here we have used that $p=2$ or $p\mid m$.
Second, if $\hat \alpha = \Delta\alpha \in L$ then we have
\begin{equation} \label{eq:mv-l}
    mv - \ell = \vprod{\hat \alpha, \gamma} + X,
\end{equation}
where, by \eqref{eq:M2},
\begin{equation} \label{eq:M2'}
    X = \Delta(2\tilde m v - \vprod{\alpha,r} - \tilde \ell) \equiv 0 \pmod{2Np^j}.
\end{equation}
Lastly, $p^{-j}\gamma\in L\otimes \Q$ so, by \eqref{eq:M3}, we have $\vprod{p^{-j}\gamma, y}\in \Z$ for all $y\in L$.
It follows from this and \eqref{eq:L'-def} that 
\begin{equation} \label{eq:M3'}
    \gamma_1 := p^{-j}\gamma \in L'.
\end{equation}

By \eqref{eq:M1'} and $\eqref{eq:mv-l}$ we have
\begin{align}
    mn-\ell^2 &= m(mv^2-2\ell v+n)-(mv-\ell)^2 \\
    &\equiv m\vprod{\hat \gamma,\gamma} - \vprod{\hat \alpha,\gamma}^2 - 2\vprod{\hat \alpha,\gamma} X \pmod{2Np^{2j}} \\
    &\equiv \vprod{\hat \alpha,\alpha}\vprod{\hat \gamma,\gamma} - \vprod{\hat \alpha,\gamma}^2 + 2\Delta \tilde m \vprod{\hat\gamma,\gamma} - 2 \vprod{\hat \alpha,\gamma} X \pmod{2Np^{2j}}.
\end{align}
We now use \eqref{eq:M3'} to obtain
\begin{align}
    \vprod{\hat\alpha,\alpha} \vprod{\hat\gamma,\gamma}-\vprod{\hat\alpha,\gamma}^2 
    &= \vprod{\hat\alpha, \vprod{\hat \gamma,\gamma}\alpha - \vprod{\hat\alpha,\gamma}\gamma} \\
    &= \Delta^2 p^{2j}\vprod{\alpha, \vprod{\gamma_1,\gamma_1}\alpha - \vprod{\alpha,\gamma_1}\gamma_1} \equiv 0 \pmod{2Np^{2j}}, \label{eq:aagg-agag}
\end{align}
where we used Lemma~\ref{lem:alpha-beta-gamma} in the second line to show that $\Delta(\vprod{\gamma_1,\gamma_1}\alpha - \vprod{\alpha,\gamma_1}\gamma_1) \in L$.
Using \eqref{eq:M3'} again we find that 
\begin{equation} \label{eq:gg}
\vprod{\hat\gamma,\gamma} = p^{2j}\vprod{\Delta\gamma_1,\gamma_1} \equiv 0\pmod{p^{2j}}.
\end{equation}
Finally, $\vprod{\hat\alpha,\gamma} = p^j\vprod{\hat\alpha,\gamma_1} \equiv 0\pmod{p^j}$.
The lemma follows.
\end{proof}

To finish the proof of Theorem~\ref{thm:exp-sum-general} we need to prove the upper bound for $\xi_{\alpha,\beta}(\ell,m,n,p^\lambda)$.
This will follow quickly from Lemma~\ref{lem:N=M-lambda/2} after we give an upper bound for $M_j(p^k)$.

\begin{lemma} \label{lem:M-bound}
    If $p^\nu\parallel 2\Delta$, $p^\mu\parallel m$, and $j\leq k\leq \lambda$ then 
    \[
    M_j(p^k) \leq 
    \begin{cases}
        p^{(g+1)(\lambda-j+\mu)+g\nu} & \text{ if } j\geq \nu+\mu, \\
        p^{(g+1)\lambda - j+\mu} & \text{ if } j \leq \nu+\mu-1.
    \end{cases}
    \]
\end{lemma}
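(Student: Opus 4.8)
The plan is to discard the quadratic congruence defining $\mathcal M_j(p^k)$ and to bound $M_j(p^k)$ using only the two linear congruences, since dropping a condition only enlarges the counting set. Identifying $L$ with $\Z^g$ so that $\vprod{x,y}=x^TMy$ and $\alpha=M^{-1}a$, $\beta=M^{-1}b$ with $a,b\in\Z^g$, I would first rewrite these two conditions as an integer linear system. Using $\vprod{\alpha,r}=a^Tr$, the second congruence reads $2\tilde m v-a^Tr\equiv\tilde\ell\pmod{p^j}$; and since $\vprod{\alpha v+r-\beta,y}=(av+Mr-b)^Ty$ for all $y\in L$, the third congruence is equivalent to the single vector congruence $av+Mr\equiv b\pmod{p^j}$. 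Together these form
\[
S'\binom{v}{r}\equiv\binom{\tilde\ell}{b}\pmod{p^j},\qquad S'=\ptMatrix{2\tilde m}{-a^T}{a}{M},
\]
which is an integer matrix because $\tilde m\in\Z$.

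The key computation is $\det S'=m$, which I would obtain from the Schur complement with respect to the invertible block $M$: $\det S'=\det(M)\,(2\tilde m+a^TM^{-1}a)$. Since $a^TM^{-1}a=\vprod{\alpha,\alpha}$ and $2\tilde m=\tfrac m\Delta-\vprod{\alpha,\alpha}$, the parenthetical factor collapses to $\tfrac m\Delta$, so $\det S'=\Delta\cdot\tfrac m\Delta=m$.

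Next I would count solutions of this nonsingular square linear congruence. Writing $S'$ in Smith normal form over $\Z_p$, say $S'=U\operatorname{diag}(p^{e_0},\dots,p^{e_g})V$ with $U,V\in\operatorname{GL}_{g+1}(\Z_p)$, we have $\sum_i e_i=v_p(\det S')=\mu$. Because $U$ and $V$ reduce to bijections of $(\Z/p^j\Z)^{g+1}$, the solution set is either empty or a coset of the kernel of $x\mapsto S'x\bmod p^j$, of size $\prod_i p^{\min(e_i,j)}\le p^{\sum_i e_i}=p^\mu$. Crucially, both linear congruences depend only on $(v,r)$ modulo $p^j$, so each solution mod $p^j$ lifts to exactly $p^{(g+1)(\lambda-j)}$ solutions in the range $v\in\Z/p^\lambda\Z$, $r\in L/p^\lambda L$. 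This yields
\[
M_j(p^k)\le p^{(g+1)(\lambda-j)+\mu}.
\]
Finally I would verify that this dominates both stated bounds: against $p^{(g+1)(\lambda-j+\mu)+g\nu}$ the difference in exponents is $g\mu+g\nu\ge0$, and against $p^{(g+1)\lambda-j+\mu}$ it is $gj\ge0$, so the lemma follows a fortiori.

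I expect the main point to get right is twofold: the reduction of the third condition (the quantifier over all $y\in L$ collapses to one integer-vector congruence on $w=av+Mr-b$), and the determinant identity $\det S'=m$ together with the validity of the count $\#\{\text{solutions}\}\le p^{v_p(\det)}$ at $p=2$. The latter is safe because Smith normal form is applied to the \emph{linear} map $S'$ and requires no diagonalization of the quadratic form, which fails at $p=2$. The conceptual obstacle is really the observation that conditions (2) and (3) already suffice and have combined determinant exactly $m$; a more naive route that retains the quadratic congruence and treats $v$ and $r$ separately is what forces the threshold $j=\nu+\mu$ and the two-case split, and this can be bypassed—indeed the argument above produces a bound strictly stronger than the one stated.
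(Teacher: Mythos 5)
Your proof is correct, and it takes a genuinely different route from the paper's, arriving at a strictly stronger bound. Both arguments discard the quadratic congruence and exploit only the coset structure of the two linear conditions modulo $p^j$, but the key mechanisms differ. The paper works with \emph{differences} of solutions: pairing the congruence $\vprod{\alpha,z}x\equiv-\vprod{y,z}\pmod{p^j}$ with the single test vector $z=\hat\alpha=\Delta\alpha$ and combining with $2\tilde mx\equiv\vprod{\alpha,y}\pmod{p^j}$ gives $mx\equiv0\pmod{p^j}$, hence $p^{j-\mu}\mid x$; a dual-lattice step ($p^{-(j-\mu)}y\in L'$, then multiplying by $\Delta$) confines $y$ to $p^{j-\mu-\nu}L$ when $j\geq\mu+\nu$ but only to $L$ otherwise, and this passage through $\Delta L'\subseteq L$ is exactly what produces the two-case statement and the loss of a factor $p^{g(\mu+\nu)}$. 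You instead fuse the two conditions into the nonsingular integer system $S'\binom{v}{r}\equiv\binom{\tilde\ell}{b}\pmod{p^j}$ with $S'=\ptMatrix{2\tilde m}{-a^T}{a}{M}$: your reduction of the quantified third condition to $av+Mr\equiv b\pmod{p^j}$ is correct, your Schur-complement computation $\det S'=\Delta\lrp{2\tilde m+a^TM^{-1}a}=m$ is correct, and the Smith-normal-form count (valid over $\Z_p$ for every $p$, including $p=2$, since only a linear map is being normalized) correctly bounds the number of solutions modulo $p^j$ by $p^{v_p(\det S')}=p^\mu$. With the lifting factor $p^{(g+1)(\lambda-j)}$ this gives
\begin{equation}
    M_j(p^k)\leq p^{(g+1)(\lambda-j)+\mu},
\end{equation}
which dominates both stated cases, as you verify. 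What each approach buys: the paper's is elementary lattice manipulation in the same style as Lemma~\ref{lem:N=M-lambda/2} (the same $\hat\alpha$ trick and dual-lattice step), requiring no normal-form machinery; yours isolates the structural reason the count is controlled by $m$ — the combined linear system has determinant exactly $m$ — and eliminates the case split. Your bound would even sharpen the conclusion of Section~\ref{sec:counting}: it yields $|\xi_{\alpha,\beta}(\ell,m,n,p^\lambda)|\leq p^{\frac{g+1}{2}+\mu}$ with no dependence on $\nu$, and hence a better implied constant in Theorem~\ref{thm:main-weil-bound}.
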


\begin{remark*}
    Since $(-1)^{(g-1)/2}m$ is fundamental, we have $\mu=1$ for odd $p$ and $\mu\leq 3$ for $p=2$.
\end{remark*}

\begin{proof}
Suppose $(v,r)\in \mathcal M_j(p^k)$. 
If $(v+x,r+y)\in \mathcal M_j(p^k)$ as well, then
\begin{align*}
    2\Tilde{m}x&\equiv \vprod{\alpha, y} \pmod{p^j}, \\
    \vprod{\alpha,z}x&\equiv -\vprod{y,z} \pmod{p^j} \quad \text{ for all } z\in L.
\end{align*}
We apply the second congruence with $z=\hat\alpha := \Delta\alpha$ to get
\begin{equation}
    2\Delta\tilde m x \equiv \vprod{\hat\alpha,y} \equiv -\vprod{\hat\alpha,\alpha}x \pmod{p^j},
\end{equation}
i.e.~$mx\equiv 0 \pmod{p^j}$.
Thus $p^{j-\mu}\mid x$, so
\[
\vprod{\alpha,y}\equiv \vprod{y,z}\equiv 0 \pmod{p^{j-\mu}} \quad \text{ for all } z\in L.
\]
Let $y_1 = p^{-j+\mu}y\in L\otimes \Q$.
Then $\vprod{y_1,z}\in \Z$ for all $z\in L$, so by \eqref{eq:L'-def}, $y_1\in L'$.
It follows that $\Delta y_1\in L$, so $y\in p^{j-\mu-\nu}L \cap L$.

If $j\geq \mu+\nu$ then every element of $\mathcal M_j(p^k)$ is of the form
\begin{equation}
    (v+p^{j-\mu}x', r+p^{j-\mu-\nu}y')
\end{equation}
for some $x'\in \Z$, $y'\in L$.
Therefore $\mathcal M_j(p^k)$ has at most $p^{(g+1)\lambda-(j-\mu)-g(j-\mu-\nu)}$ elements. 
If $j<\mu+\nu$ then every element of $\mathcal M_j(p^k)$ is of the form
\begin{equation}
    (v+p^{j-\mu}x', r+y')
\end{equation}
for some $x'\in \Z$, $y'\in L$, so $M_j(p^k)\leq p^{(g+1)\lambda-(j-\mu)}$.
\end{proof}

By Lemma~\ref{lem:N=M-lambda/2} we have
\begin{equation}
    p^{-\lambda h}\left|N(p^\lambda) - p^g N(p^{\lambda-1})\right| \leq p^{-\lambda h}\max\left(M_{\lf}(p^\lambda), \tfrac 1p M_{\lf}(p^{\lambda-1})\right).
\end{equation}
If $\lf \geq \nu+\mu$ then Lemma~\ref{lem:M-bound} gives
\[
    p^{-\lambda h}\left|N(p^\lambda) - p^g N(p^{\lambda-1})\right| \leq p^{\nu g + \frac 72(g+1)}
\]
because $\mu\leq 3$.
On the other hand, if $\lf \leq \nu+\mu-1$ then $\lambda\leq 2\nu+5$, so Lemma~\ref{lem:M-bound} gives
\[
    p^{-\lambda h}\left|N(p^\lambda) - p^g N(p^{\lambda-1})\right| \leq p^{\frac{g+1}2\lambda - \lf + 3} \leq p^{\nu g+\frac 52g+\frac 72}.
\]
In either case we have $|\xi_{\alpha,\beta}(\ell,m,n,p^\lambda)|\leq p^{A\nu g}$ for some absolute constant $A$.
\qed

\printbibliography

\end{document}